\theoremstyle{plain}
\newtheorem{theor10}{Theorem}
\newenvironment{theor1}
  {\pushQED{\qed}\begin{theor10}}
  {\popQED\end{theor10}}
\newtheorem{prop10}{Proposition}
\newtheorem{cor10}{Corollary}
\newtheorem{theor0}{Theorem}[section]
\newtheorem{lem0}[theor0]{Lemma}
\newenvironment{lem}
  {\pushQED{\qed}\begin{lem0}}
  {\popQED\end{lem0}}
\newtheorem{prop0}[theor0]{Proposition}
\newenvironment{prop}
  {\pushQED{\qed}\begin{prop0}}
  {\popQED\end{prop0}}
\newtheorem{cor0}[theor0]{Corollary}
\newenvironment{cor}
  {\pushQED{\qed}\begin{cor0}}
  {\popQED\end{cor0}}
\newtheorem{propr0}[theor0]{Property}
\newtheorem{hyp0}[theor0]{Hypothesis}
\newtheorem{result0}[theor0]{Result}
\newtheorem{conj0}[theor0]{Conjecture}
\newtheorem{heur0}[theor0]{Heuristics}
\theoremstyle{definition}
\newtheorem{defin0}[theor0]{Definition}
\newenvironment{defin}
  {\pushQED{\qed}\begin{defin0}}
  {\popQED\end{defin0}}
\newtheorem{rems0}[theor0]{Remarks}
\newtheorem{ex0}[theor0]{Example}
\newtheorem{exs0}[theor0]{Examples}
\newtheorem{rem0}[theor0]{Remark}
\newtheorem{qu0}[theor0]{Question}
\newtheorem{qus0}[theor0]{Questions}
  \newtheorem{as0}[theor0]{Assumption}
\numberwithin{equation}{section}
\mathchardef\emptyset="001F
\numberwithin{equation}{section}
\newcommand{\N}{\mathbb N}
\newcommand{\Sp}{\mathbb S}
\newcommand{\e}{\varepsilon}
\newcommand{\Log}{|\!\log\e|}
\newcommand{\calQ}{\mathcal{Q}}
\newcommand{\calP}{\mathcal{P}}
\newcommand{\Mcal}{\mathcal{M}}
\newcommand{\Lc}{\mathcal{L}}
\newcommand{\Dm}{\mathbb{D}}
\newcommand{\Kc}{\mathcal K}
\newcommand{\Sc}{\mathcal S}
\newcommand{\Rc}{\mathcal R}
\newcommand{\Hf}{\mathfrak H}
\newcommand{\R}{\mathbb R}
\newcommand{\F}{\mathcal F}
\newcommand{\Nc}{\mathcal N}
\newcommand{\Id}{\operatorname{Id}}
\newcommand{\op}{{\operatorname{op}}}
\newcommand{\E}{\mathbb{E}}
\newcommand{\Cov}{\operatorname{Cov}}
\newcommand{\ee}{e}
\newcommand{\Aa}{\boldsymbol a}
\newcommand{\bb}{{\boldsymbol b}}
\newcommand{\Ld}{\operatorname{L}}
\newcommand{\sym}{{\operatorname{sym}}}
\newcommand{\step}[1]{\noindent \textit{Step} #1.}
\newcommand{\Pm}{\mathbb{P}}
\newcommand{\pr}[1]{\mathbb{P}\left[ #1 \right]}
\newcommand{\expec}[1]{\mathbb{E}\left[ #1 \right]}
\newcommand{\expecm}[1]{\mathbb{E}\big[ #1 \big]}
\newcommand{\var}[1]{\mathrm{Var}\left[#1\right]}
\newcommand{\cov}[2]{\operatorname{Cov}\left[{#1};{#2}\right]}
\newcommand{\expeC}[2]{\mathbb{E}\left[\left. #1 \,\right\|\,#2\right]}
\newcommand{\dTV}[2]{\operatorname{d}_{\operatorname{TV}}\left({#1};{#2}\right)}
\newcommand{\dWW}[2]{\operatorname{W}_2\left({#1};{#2}\right)}
\title[Scaling limit of the homogenization commutator]{Scaling limit of the homogenization commutator\\for Gaussian coefficient fields}
\author[M. Duerinckx]{Mitia Duerinckx}
\author[J. Fischer]{Julian Fischer}
\author[A. Gloria]{Antoine Gloria}
\address[Mitia Duerinckx]{Laboratoire de Mathématique d'Orsay, UMR 8628, Université Paris-Sud, F-91405 Orsay, France \& Universit\'e Libre de Bruxelles, Département de Mathématique, Brussels, Belgium}
\email{mduerinc@ulb.ac.be}
\address[Julian Fischer]{Institute of Science and Technology Austria (IST Austria), Am Campus~1, 3400 Klosterneuburg, Austria}
\email{julian.fischer@ist.ac.at}
\address[Antoine Gloria]{Sorbonne Universit\'e, CNRS, Universit\'e de Paris, Laboratoire Jacques-Louis Lions (LJLL), F-75005 Paris, France \& Universit\'e Libre de Bruxelles, Département de Mathématique, Brussels, Belgium}
\email{gloria@ljll.math.upmc.fr}
\date{}
\begin{document}
\selectlanguage{english}

\begin{abstract}
Consider a linear elliptic partial differential equation in divergence form with a random coefficient field. The solution-operator displays fluctuations around its expectation. The recently-developed pathwise theory of fluctuations in stochastic homogenization reduces the characterization of these fluctuations to those of the so-called standard homogenization commutator. In this contribution, we investigate the scaling limit of this key quantity: starting from a Gaussian-like coefficient field with possibly strong correlations,
we establish the convergence of the rescaled commutator to a fractional Gaussian field, depending on the decay of correlations of the coefficient field, and we investigate the (non)degeneracy of the limit. This extends to general dimension $d\ge 1$ previous results so far limited to dimension $d=1$, and to the continuum setting with strong correlations 
recent results in the discrete iid case.

\medskip

\noindent Subject classification: 60B12; 35B27; 60H07; 60F05; 60H25
\end{abstract}

\maketitle
\setcounter{tocdepth}{1}
\tableofcontents
\bigskip

\section{Introduction}
\subsection{General overview}
Let $\Aa$ be a stationary and ergodic random coefficient field that satisfies the boundedness and ellipticity properties
\begin{equation}\label{f.56}
|\Aa(x)\xi|\le|\xi|,\qquad\xi\cdot\Aa(x)\xi\ge\lambda|\xi|^2,\qquad\mbox{for all}\;x,\xi\in\R^d,
\end{equation}
for some $\lambda>0$.
Given a deterministic vector field $f\in C^\infty_c(\R^d)^d$, we consider the random family $(\nabla u_\e)_{\e>0}$ of unique Lax-Milgram solutions (which henceforth means the unique weak solutions in $\dot H^1(\R^d)$) to the following rescaled elliptic equations in $\R^d$,
\begin{equation}\label{eq:first-def-ups}
-\nabla\cdot\big(\Aa(\tfrac\cdot\e)\nabla u_\e\big)\,=\,\nabla\cdot f.
\end{equation}
It is known since the pioneering work of Papanicolaou and Varadhan~\cite{PapaVara} and of Kozlov~\cite{Kozlov-79} that, almost surely, $\nabla u_\e$ converges weakly in $\Ld^2(\R^d)$ as $\e\downarrow0$ to the unique Lax-Milgram solution $\bar u$ in $\R^d$ of
\[-\nabla\cdot\big(\bar\Aa\nabla\bar u)=\nabla\cdot f,\]
where $\bar\Aa$ is a deterministic and constant matrix that only depends on the law of $\Aa$ and is given for $1\le i\le d$ by
\[\bar\Aa\ee_i=\expec{\Aa(\nabla\phi_i+\ee_i)},\]
in terms of the so-called corrector $\phi_i$ in the direction $\ee_i$ (cf.~Lemma~\ref{si} below).
Most results on quantitative stochastic homogenization in the last decade focused on the accurate description of the spatial oscillations of the solution operator for~\eqref{eq:first-def-ups} (e.g.~\cite{GO1,GO2,GNO1}, \cite{AS,GNO-reg,AKM-book}, and  the references therein).
In this contribution we rather focus on the random fluctuations of macroscopic observables of the form $\int_{\R^d}g\cdot\nabla u_\e$ or $\int_{\R^d}g\cdot\Aa(\tfrac\cdot\e)\nabla u_\e$ with $g\in C^\infty_c(\R^d)^d$, and establish (quantitative) central limit theorems.
More precisely, pursuing the investigation of our previous works on the topic~\cite{DGO1,DGO2,DO1} (see also~\cite{MO,GN,GuM,MN,AKM2}), and inspired by previous computations in the one-dimensional setting~\cite{BGMP-08,Gu-Bal-12,LNZH-17}, the present contribution aims at analyzing the effects of strong correlations of the coefficient field $\Aa$. For simplicity and concreteness, we focus on the following Gaussian model family of coefficient fields.
This particular setting leads to significant simplifications since  Malliavin calculus then allows to systematically linearize the dependence on the randomness.
\begin{defin}\label{def:coeff}
The coefficient field $\Aa$ is said to be \emph{Gaussian with parameter~$\beta>0$} if it has the form
\[\Aa(x)\,:=\,a_0(G(x)),\]
where $a_0\in C^2_b(\R^\kappa)^{d\times d}$ is such that the boundedness and ellipticity assumptions~\eqref{f.56} are satisfied pointwise, and where $G$ is some $\R^\kappa$-valued centered stationary Gaussian random field on $\R^d$ constructed on a probability space $(\Omega,\F,\Pm)$ (with expectation $\E$), characterized by its covariance function
\[c(x)\,:=\,\expec{G(x)\otimes G(0)},\qquad c:\R^d\to\R^{\kappa\times\kappa},\]
which is assumed to have $\beta$-algebraic decay at infinity in the following sense: there exists $C_0>0$ such that for all $x\in\R^d$,
\begin{equation}
\tfrac1{C_0}(1+|x|)^{-\beta}\,\le\,|c(x)| \, \le \,C_0(1+|x|)^{-\beta}, \label{eq:cov-as}
\end{equation}
and in the case $\beta<d$ we further assume $|\nabla c(x)|\le C_0(1+|x|)^{-\beta-1}$.
In addition, we assume that $c$ can be decomposed as $c=c_0\ast c_0$ where $c_0$ satisfies\footnote{Note that this decay assumption for $c_0$ (including the logarithmic correction in the critical case $\beta=d$) precisely ensures that $c=c_0\ast c_0$ satisfies the upper bound in~\eqref{eq:cov-as}.}
\begin{equation}
|c_0(x)|\,\le\,C_0(1+|x|)^{-\frac12(d+\beta)} \times\left\{ 
\begin{array}{lcl}
1&:&\beta \ne d,
\\
\log^{-\frac12} (1+|x|)&:&\beta=d.
\end{array}
\right. \label{eq:cov-as-bis}\qedhere
\end{equation}
\end{defin}

\noindent
Since the covariance function $c$ decays at infinity, the Gaussian random field~$G$ is known to be (strongly) mixing.
In particular, $G$ is ergodic, which ensures existence and uniqueness of correctors and homogenized coefficients (cf.~Lemma~\ref{si}).
Note however that $G$ is $\alpha$-mixing only if the covariance is integrable, that is, if $\beta>d$ (e.g.~\cite{Doukhan-94}).

\medskip
In the companion articles~\cite{DGO1,DGO2}, it was shown that fluctuations of macroscopic observables are determined at leading order by those of the so-called standard homogenization commutator (see also~\cite{AS,AKM2})
\[\Xi_i\,:=\,(\Aa-\bar\Aa)(\nabla\phi_i+\ee_i).\]
This is referred to as the \emph{pathwise structure of fluctuations} in stochastic homogenization, which originates in the crucial observation that the $2$-scale expansion of commutators remains accurate in the fluctuation scaling.
More precisely,
the results in~\cite{DGO2} take on the following guise, where all scalings and rates are (generically) optimal. Henceforth, we focus on dimensions $d>1$ --- the one-dimensional setting is indeed much simpler since equation~\eqref{eq:first-def-ups} can then be explicitly integrated.
\begin{enumerate}[$\bullet$]
\item \emph{Fluctuation scaling:}
For all $f,g\in C^\infty_c(\R^d)^d$ and $p<\infty$,
\[\quad\expec{\Big|\pi_{d,\beta}(\tfrac1\e)^\frac12\int_{\R^d}g\cdot\nabla u_\e\Big|^p}^\frac1p\,\lesssim_{p,f,g}\,1,\]
where the rescaling is defined by
\begin{align}\label{e.pi*}
\quad\pi_{d,\beta}(r)\,:=\,
\left\{\begin{array}{lll}
(1+r)^d&:&\beta>d,\\
\frac{(1+r)^d}{\log(2+r)}&:&\beta=d,\\
(1+r)^\beta&:&\beta<d.
\end{array}\right.
\end{align}
\item \emph{Pathwise structure of fluctuations:}
For all $f,g\in C^\infty_c(\R^d)^d$ and $p<\infty$,
\begin{multline}\label{eq:pathwise}
\qquad\expec{\Big|\pi_{d,\beta}(\tfrac1\e)^\frac12\Big(\int_{\R^d} g\cdot \nabla\big(u_\e-\expec{u_\e}\big)+\int_{\R^d}(\bar\calP_H^*g)\cdot\Xi_i(\tfrac\cdot\e)\nabla_i\bar u\Big)\Big|^p}^\frac1p\\
\quad+\expec{\Big|\pi_{d,\beta}(\tfrac1\e)^\frac12\Big(\int_{\R^d} g\cdot\big(\Aa(\tfrac\cdot\e)\nabla u_\e-\expecm{\Aa(\tfrac\cdot\e)\nabla u_\e}\big)-\int_{\R^d}(\bar\calP_L^*g)\cdot\Xi_i(\tfrac\cdot\e)\nabla_i\bar u\Big)\Big|^p}^\frac1p\\
\,\lesssim_{p,f,g}\,\e\mu_{d,\beta}(\tfrac1\e),
\end{multline}
in terms of the homogenized Helmholtz and Leray projections on $\Ld^2(\R^d)^d$,
\[\quad\bar\calP_H^*:=\nabla(\nabla\cdot\bar\Aa^* \nabla)^{-1}\nabla\cdot,\qquad \bar\calP_L^*:=\Id-\bar\calP_H\bar\Aa^*,\]
where we have set
\begin{equation}\label{mudbeta}
\quad\mu_{d,\beta}(r)\,:=\, 
\left\{
\begin{array}{lll}
1 &:&\beta>2,\,d>2,\\
\log^\frac12(2+r)&:&\beta>2,\,d=2,~\text{or}~\beta=2,\,d>2,\\
\log(2+r)&:&\beta=2,\,d=2,\\
(1+r)^{1-\frac\beta2}&:&\beta<2,\,d\ge 2.
\end{array}
\right.
\end{equation}
\end{enumerate}
These results reduce the description of fluctuations of macroscopic observables at leading order to the fluctuations of (large-scale averages of) the standard homogenization commutator $\Xi$ only. In order to fully describe fluctuations of macroscopic observables, it then remains to analyze the scaling limit of $\Xi$ itself. Under strong decay assumptions on the correlations of the coefficient field, the rescaled commutator $\e^{-\frac d2}\Xi(\tfrac\cdot\e)$ is known to converge in law (as a random Schwartz distribution) to a Gaussian white noise, which was first established in the discrete setting in~\cite{DGO1}, in the case of finite range of dependence in~\cite{AKM2,GO4}, and in the integrable Gaussian setting ($\beta>d$) in~\cite{DO1}.
In the present contribution, we analyze the corresponding scaling limit for the whole Gaussian family of coefficient fields, including sharp convergence rates, and we emphasize the effects of strong correlations.

\subsection{Main results}
We address two main questions:
\begin{itemize}
\item The scaling limit of the commutator, both qualitatively and quantitatively, for weak and strong correlations;
\item The non-degeneracy of the scaling limit.
\end{itemize}
Before we state the main results, let us emphasize that this analysis is possible because the key object for fluctuations in stochastic homogenization, the homogenization commutator, turns out to be a local map of the coefficients. This appears clearly in \cite{AKM2,GO4} in the case of an ensemble of finite range of dependence, where it is proved 
that the homogenization commutator is also a locally-dependent random field. The proof strongly relies on the fact that the mixing condition is linear (in the sense it is compatible with renormalization techniques, or iterations). In the present article, we consider Gaussian coefficients, for which mixing conditions (in form of functional inequalities) are nonlinear (in particular, these are not easily iterated). In this setting the locality of the homogenization commutator is a nonlinear one, more in the spirit of \cite{DGO1}. 
As opposed to  \cite{AKM2,GO4}, the upcoming results are not only qualitative, but also quantitative.

\medskip

The following states that in the Gaussian setting the scaling limit of the standard homogenization commutator is a Gaussian white noise whenever correlations are integrable, that is, whenever $\beta\ge d$, while in the non-integrable case $\beta<d$ the scaling limit is a fractional Gaussian field. This illustrates that the locality property of the commutator 
with respect to the coefficients is a \emph{relative locality}. This fully extends to the multidimensional setting the (explicit) computations of~\cite{BGMP-08} for $d=1$, and
extends the results of \cite{DGO1} in the iid discrete case to this continuum setting with correlations. Finer statements for the convergence of the covariance structure with optimal rates are included in Section~\ref{chap:conv-cov}, cf.~Proposition~\ref{prop:conv-cov}, and are completely new (even for integrable correlations).
To ease the reading, only a simplified version of these resuls is given below. 

\begin{theor1}\label{th:main}
Let the coefficient field $\Aa$ be Gaussian with parameter $\beta>0$ as in Definition~\ref{def:coeff}.
For $F\in C^\infty_c(\R^d)^{d\times d}$, we write for short
\[I_\e(F)\,:=\,\pi_{d,\beta}(\tfrac1\e)^{\frac12}\int_{\R^d}F(x):\Xi(\tfrac x\e)\,dx.\]
\begin{enumerate}[(i)]
\item \emph{Convergence of the covariance structure:}
\begin{enumerate}[$\bullet$]
\smallskip\item \emph{Integrable case $\beta>d$:} There exists a constant tensor $\calQ$ of order $4$ such that for all $F,F'\in C^\infty_c(\R^d)^{d\times d}$,
\[\qquad\qquad\lim_{\e\downarrow0}\cov{I_\e(F)}{I_\e(F')}\,=\,\int_{\R^d}F(x):\calQ:F'(x)\,dx.\]
\item \emph{Critical case $\beta=d$:} If for all $x$ the rescaled covariance $L^dc(Lx)$ admits a limit as $L\uparrow\infty$, then the same conclusion holds as in the integrable case.
\smallskip\item \emph{Non-integrable case $\beta<d$:} If for all $x$ the rescaled covariance $L^\beta c(Lx)$ admits a limit as $L\uparrow\infty$, then there exists a $4$th-order tensor field $\calQ$ on $\Sp^{d-1}$ such that for all $F,F'\in C^\infty_c(\R^d)^{d\times d}$,
\[\qquad\qquad\lim_{\e\downarrow0}\cov{I_\e(F)}{I_\e(F')}\,=\,\int_{\R^d}\int_{\R^d}F(x):\frac{\calQ(\frac{x-y}{|x-y|})}{|x-y|^\beta}:F'(y)\,dxdy.\]
\end{enumerate}
\smallskip\item \emph{Asymptotic normality:}
For all $F\in C^\infty_c(\R^d)^{d\times d}$ and $\e>0$,  
\begin{multline*}
\qquad\dWW{\frac{I_\e(F)}{\var{I_\e(F)}^\frac12}}\Nc+\dTV{\frac{I_\e(F)}{\var{I_\e(F)}^\frac12}}\Nc\\
\,\lesssim_F\,\frac1{\var{I_\e(F)}}\left\{\begin{array}{lll}
\e^\frac d2\Log&:&\beta>d,\\
\e^\frac d2 \Log^\frac32\log\Log  &:& \beta=d,\\
\e^\frac\beta2&:&\beta<d,
\end{array}\right.
\end{multline*}
where $\dWW\cdot\Nc$ and $\dTV\cdot\Nc$ denote the $2$-Wasserstein (see e.g.~\cite{NP-book}) and the total variation distance to a standard Gaussian law, respectively.
\end{enumerate}
In particular, if the limiting covariance structure is non-degenerate, that is, if for all nonzero test functions $F\in C^\infty_c(\R^d)^{d\times d}$, $\liminf_{\e}\var{I_\e(F)}>0$, and further assuming in the non-integrable case $\beta\le d$ that the rescaled covariance $L^{\beta}c(L\cdot)$ admits a pointwise limit as $L\uparrow\infty$, then the rescaled homogenization commutator $\pi_{d,\beta}(\frac1\e)^\frac12\Xi(\frac\cdot\e)$ converges in law (as a random Schwartz distribution) to a (matrix-valued) Gaussian white noise with variance $\calQ$ in the integrable case $\beta\ge d$, or to a (matrix-valued) fractional Gaussian field with kernel $\calQ(\frac{x}{|x|})|x|^{-\beta}$ in the non-integrable case $\beta<d$.
\qedhere
\end{theor1}
The additional condition on the convergence of the rescaled covariance of~$G$ in the non-integrable case is necessary: strong oscillations of the covariance of~$G$ can break down the convergence of the covariance structure of~$\Xi$ (it suffices to consider rescaled covariances $L^\beta c(Lx)$ with several cluster points when $L\uparrow \infty$) . This is a new feature due to strong correlations. Likewise, convergence rates can be arbitrarily slow.
The proof follows the general structure of the analysis of the i.i.d.\@ discrete case in~\cite{DGO1} and makes strong use of tools from Malliavin calculus as in~\cite{DO1}.

\medskip

Combining this result with the pathwise structure of fluctuations~\eqref{eq:pathwise}, we are led to a quantitative CLT (with optimal rates) for all macroscopic observables.
An important question concerns the possible degeneracy of the limit: as observed for $d=1$ in~\cite{Gu-Bal-12,LNZH-17} (see also~\cite{Taqqu}), degeneracy may occur and leads to different, non-Gaussian behaviors.
In Section~\ref{chap:deg}, we establish the following sufficient criteria, based on the explicit characterization of the limiting covariance structures provided by the Malliavin approach. Note that the condition in the non-integrable case is much more restrictive than in the integrable case.
\begin{enumerate}[$\bullet$]
\item In the integrable case $\beta>d$, if $\Aa=a_0(G)$ is symmetric, if there exist $y,\alpha \in \R^\kappa$ such that the symmetric matrix $\alpha_l \partial_la_0(y)$ is definite, and if the covariance function $c$ is smooth at the origin, then the fluctuation tensor $\mathcal Q$ is non-degenerate.
\smallskip\item In the non-integrable case $\beta<d$, if $\Aa=a_0(G)$ is symmetric and if for some $1\le l\le\kappa$ the symmetric matrix $\partial_la_0(y)$ is definite for all $y\in \R^\kappa$, then the fluctuation tensor field~$\mathcal Q$ is non-degenerate. Many degenerate examples can however be constructed.
\smallskip\item In both the integrable and the non-integrable cases, \emph{non-degeneracy is generic}.
\end{enumerate}
Precise statements are postponed to Section~\ref{chap:deg}.

\bigskip\noindent
\begin{samepage}{\bf Notation}
\begin{enumerate}[$\bullet$]
\item We denote by $C\ge1$ any constant that only depends on $d$, $\lambda$, $\|a_0\|_{W^{2,\infty}}$, and on the covariance function $c$ via the constants $C_0,\beta$ in~\eqref{eq:cov-as} \&~\eqref{eq:cov-as-bis}. We use the notation $\lesssim$ (resp.~$\gtrsim$) for $\le C\times$ (resp.\@ $\ge\frac1C\times$) up to such a multiplicative constant $C$. We write $\simeq$ when both $\lesssim$ and $\gtrsim$ hold. We add subscripts to $C,\lesssim,\gtrsim,\simeq$ in order to indicate dependence on other parameters. If the subscript is a function (e.g.~$\lesssim_f$), then it is understood as dependence on an upper bound on a suitable (weighted) Sobolev norm.
\item The ball centered at $x$ of radius $r$ in $\R^d$ is denoted by $B_r(x)$, and we simply write $B(x)=B_1(x)$, $B_r=B_r(0)$, and $B=B_1(0)$.
\item For a function $f$ and $1\le p<\infty$, we write $[f]_p(x):=(\fint_{B(x)}|f|^p)^{1/p}$ for the local $\Ld^p$ average, and similarly $[f]_\infty(x):=\sup_{B(x)}|f|$.
\item We systematically use {Einstein's summation convention} on repeated indices.
\item  For $a,b\in\R$, we write $a\vee b:=\max\{a,b\}$ and $a\wedge b:=\min\{a,b\}$.
\end{enumerate}
\end{samepage}

\section{Preliminary}

We first review useful results from Malliavin calculus for the fine analysis of functionals of the underlying Gaussian field $G$. Next, we recall several tools from quantitative stochastic homogenization theory, including optimal corrector estimates and annealed Calder\'on-Zygmund theory for linear elliptic equations with random coefficients.

\subsection{Malliavin calculus}

Since the covariance function $c$ is uniformly bounded (cf.~\eqref{eq:cov-as}), the Gaussian random field $G$ can be viewed as a random Schwartz distribution, that is, as a random element in $\Sc'(\R^d)^\kappa$: for all $\zeta_1,\zeta_2\in C^\infty_c(\R^d)^\kappa$ we define $G(\zeta_1)$, $G(\zeta_2)$ (or $\int_{\R^d}G\zeta_1$, $\int_{\R^d}G\zeta_2$) as centered Gaussian random variables with covariance
\[\cov{G(\zeta_1)}{G(\zeta_2)}\,:=\,\iint_{\R^d\times\R^d} \zeta_1(x)\cdot c(x-y)\,\zeta_2(y)\,dxdy.\]
We define  $\Hf$ as the closure of $C^\infty_c(\R^d)^\kappa$ for the (semi)norm
\[\|\zeta_1\|_{\Hf}^2:=\langle\zeta_1,\zeta_1\rangle_\Hf,\qquad\langle\zeta_1,\zeta_2\rangle_\Hf:=\iint_{\R^d\times\R^d} \zeta_1(x)\cdot c(x-y)\,\zeta_2(y)\,dxdy.\]
The space $\Hf$ (up to taking the quotient with respect to the kernel of $\|\cdot\|_\Hf$) is a separable Hilbert space.
In view of the isometry relation $\cov{G(\zeta_1)}{G(\zeta_2)}=\langle\zeta_1,\zeta_2\rangle_\Hf$, the random field $G$ is said to be an {isonormal Gaussian process} over $\Hf$.

We recall some basic definitions of the Malliavin calculus with respect to the Gaussian field $G$ (see e.g.~\cite{Malliavin-97,Nualart,NP-book} for details).
Without loss of generality, we work under the minimality assumption $\F=\sigma(G)$,
which implies that the linear subspace
\[\Rc:=\Big\{g\big(G(\zeta_1),\ldots,G(\zeta_n)\big)\,:\,n\in\N,\,g\in C_c^\infty(\R^n),\,\zeta_1,\ldots,\zeta_n\in C_c^\infty(\R^d)^\kappa\Big\}\subset\Ld^2(\Omega)\]
is dense in $\Ld^2(\Omega)$. This allows to define operators and prove properties on the simpler subspace $\Rc$ before extending them to $\Ld^2(\Omega)$ by density.
For $r\ge1$ we similarly define
\[\Rc(\Hf^{\otimes r}):=\Big\{\sum_{i=1}^n\psi_iX_i\,:\,n\in\N,\,X_1,\ldots, X_n\in\Rc,\,\psi_1,\ldots,\psi_n\in \Hf^{\otimes r}\Big\}\subset\Ld^2(\Omega;\Hf^{\otimes r}),\]
which is dense in $\Ld^2(\Omega;\Hf^{\otimes r})$.
For a random variable $X\in \Rc$, say $X=g(G(\zeta_1),\ldots,G(\zeta_n))$, we define its {Malliavin derivative} $DX\in \Ld^2(\Omega;\Hf)$ as
\begin{align}\label{eq:D-expl}
DX\,=\,\sum_{i=1}^n\zeta_i \,\partial_i g (G(\zeta_1),\ldots,G(\zeta_n)).
\end{align}
For an element $X\in \Rc(\Hf^{\otimes r})$ with $r\ge1$, say $X=\sum_{i=1}^n\psi_iX_i$, the Malliavin derivative $DX\in\Ld^2(\Omega;\Hf^{\otimes(r+1)})$ is then given by
$DX=\sum_{i=1}^n\psi_i\otimes DX_i$.
For $j\ge1$, we iteratively define the $j$th-order Malliavin derivative $D^j:\Rc(\Hf^{\otimes r})\to\Ld^2(\Omega;\Hf^{\otimes (r+j)})$ for all $r\ge0$.
For all $r,m\ge0$, we then set
\begin{gather*}
\langle X,Y\rangle_{\Dm^{m,2}(\Hf^{\otimes r})}:=\expec{\langle X,Y\rangle_{\Hf^{\otimes r}}}+\sum_{j=1}^m\expec{\langle D^jX,D^jY\rangle_{\Hf^{\otimes(r+j)}}},
\end{gather*}
we define the {Malliavin-Sobolev space} $\Dm^{m,2}(\Hf^{\otimes r})$ as the closure of $\Rc(\Hf^{\otimes r})$ for the corresponding norm, and we extend the Malliavin derivatives $D^j$ by density to these spaces.
Next, we define a {divergence operator} $D^*$ as the adjoint of the Malliavin derivative $D$,
and we construct the so-called {Ornstein-Uhlenbeck operator}
\[\Lc:=D^* D,\]
which is an essentially self-adjoint nonnegative operator.
We refer e.g.\@ to~\cite[p.34]{NP-book} for a description of the explicit action of $D^*$ and $\Lc$ on $\Rc$.
 In particular, it is easily checked that $\Lc$ commutes with shifts.
In addition, a direct computation (e.g.~\cite[p.35]{NP-book}) leads to the commutator relation
\begin{align}\label{eq:commut-DL}
D\Lc=(1+\Lc)D.
\end{align}

Based on the above definitions, we state the following proposition, which collects various useful results for the fine analysis of functionals of the Gaussian field $G$.
Item~(i) is classical.
Item~(ii) is best known in the discrete Gaussian setting~\cite{HS-94}.
Item~(iii) in total variation distance is a consequence of Stein's method: it was first obtained in the discrete setting by Chatterjee~\cite{C2}, while the present Malliavin analogue is due to~\cite{NP-08,NPR-09}. The corresponding result in $2$-Wasserstein distance is of a different nature and is due to~\cite{LNP-15}.
A proof and precise references are included in~\cite[Appendix~A]{DO1}.
Note that since $\Lc$ is nonnegative the inverse operator $(1+\Lc)^{-1}$ is well-defined and has operator norm bounded by $1$.

\begin{prop}[\cite{HS-94,C2,NP-08,NPR-09,LNP-15}]\label{prop:Mall}$ $
\begin{enumerate}[(i)]
\item \emph{First-order Poincaré inequality:} For all $X\in \Ld^2(\Omega)$,
\[\var{X}\le\expec{\|DX\|_{\Hf}^2}.\]
\item \emph{Helffer-Sjöstrand identity:} For all $X,Y\in \Dm^{1,2}(\Omega)$,
\begin{equation}\label{eq:HS}
\cov{X}{Y}=\expec{\langle DX,(1+\Lc)^{-1}DY\rangle_{\Hf}}.
\end{equation}
\item \emph{Second-order Poincaré inequality:} For all $X\in \Ld^2(\Omega)$ with $\expec{X}=0$ and $\var{X}=1$,
\begin{eqnarray*}
\quad\dWW{X}{\Nc}\vee\dTV{X}{\Nc}&\le& 2\,\var{\langle DX,(1+\Lc)^{-1}DX\rangle_{\Hf}}^\frac12\\
&\le&3\,\expec{\|D^2X\|_{\op}^4}^\frac14\expec{\|DX\|_\Hf^4}^\frac14,
\end{eqnarray*}
where $\dWW\cdot\Nc$ and $\dTV\cdot\Nc$ denote the $2$-Wasserstein and the total variation distances to a standard Gaussian law, respectively, and where the operator norm of $D^2X$ is defined by
\begin{equation}\label{eq:def-op}
\|D^2X\|_{\op}\,:=\,\sup_{\zeta,\zeta'\in\Hf\atop\|\zeta\|_\Hf=\|\zeta'\|_\Hf=1}\langle D^2X,\zeta\otimes\zeta'\rangle_{\Hf^{\otimes2}}.
\qedhere
\end{equation}
\end{enumerate}
\end{prop}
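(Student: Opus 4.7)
My plan is to prove the three items by first setting up the Wiener chaos decomposition of $\Ld^2(\Omega)$, which diagonalizes all the relevant operators, and then combining it with Stein's method for item~(iii).

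\textbf{Chaos decomposition and items (i)–(ii).} Since $\F=\sigma(G)$, every $X\in\Ld^2(\Omega)$ admits a unique decomposition $X=\expec{X}+\sum_{n\ge1}I_n(f_n)$ with $f_n\in\Hf^{\otimes n}$ symmetric and $\var{X}=\sum_{n\ge1}n!\|f_n\|_{\Hf^{\otimes n}}^2$. First I would verify on the dense subspace $\Rc$ (using the explicit formula~\eqref{eq:D-expl}) that
$DI_n(f_n)=n\,I_{n-1}(f_n)$ (with one coordinate left free in~$\Hf$) and that $\Lc I_n(f_n)=n\,I_n(f_n)$, i.e.\@ $\Lc$ coincides with the number operator; the commutator relation~\eqref{eq:commut-DL} is then just $n\cdot n=n(1+(n-1))$. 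Item~(i) follows immediately since
\[\expec{\|DX\|_\Hf^2}\,=\,\sum_{n\ge1}n\cdot n!\,\|f_n\|_{\Hf^{\otimes n}}^2\,\ge\,\sum_{n\ge1}n!\,\|f_n\|_{\Hf^{\otimes n}}^2\,=\,\var{X}.\]
For~(ii), writing $Y=\expec{Y}+\sum_{n\ge1}I_n(g_n)$ and using that $(1+\Lc)^{-1}$ acts as multiplication by $1/n$ on the $(n-1)$st chaos, one gets
\[\expec{\langle DX,(1+\Lc)^{-1}DY\rangle_\Hf}\,=\,\sum_{n\ge 1}n\cdot\tfrac1n\cdot n!\,\langle f_n,g_n\rangle_{\Hf^{\otimes n}}\,=\,\cov{X}{Y}.\]

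\textbf{Stein–Malliavin step for item (iii).} For $X\in\Dm^{1,2}$ with $\expec{X}=0$ and $\var{X}=1$, the identity $X=\Lc\Lc^{-1}X=D^*D\Lc^{-1}X$ combined with~\eqref{eq:commut-DL} (which yields $D\Lc^{-1}=(1+\Lc)^{-1}D$) gives, for any $g\in C^1$ with bounded derivatives, the integration-by-parts formula
\[\expec{Xg(X)}\,=\,\expec{g'(X)\,\langle DX,(1+\Lc)^{-1}DX\rangle_{\Hf}}.\]
Combined with the classical solution of Stein's equation for the Gaussian (with the appropriate Lipschitz bounds on $g$ and $g'$), this yields
$\dTV{X}{\Nc}\le 2\,\expec{|1-\langle DX,(1+\Lc)^{-1}DX\rangle_{\Hf}|}$; since by~(ii) the expectation of this inner product equals $\var{X}=1$, the Cauchy–Schwarz inequality converts the right-hand side into the $\var{\cdot}^{1/2}$-form stated in~(iii). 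For the $2$-Wasserstein part, the same integration by parts is inserted into the transport/interpolation formula of Ledoux–Nourdin–Peccati~\cite{LNP-15}, which uses the Mehler semigroup $e^{-t\Lc}$ to write the transport cost as a time integral and then bounds it by the same quantity; I would quote this directly rather than redo it.

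\textbf{Reduction to the second derivative.} For the final, more concrete bound in~(iii), I would set $F:=\langle DX,(1+\Lc)^{-1}DX\rangle_{\Hf}$ and apply item~(i) to $F$. The Malliavin chain rule, together with the fact that $(1+\Lc)^{-1}$ commutes with $D$ up to a shift (again via~\eqref{eq:commut-DL}), gives a Leibniz-type identity of the form
\[DF\,=\,2\,\big(D^2X\big)\cdot(1+\Lc)^{-1}DX\qquad\text{in }\Hf,\]
after carefully regrouping the two terms produced by differentiating the inner product and using self-adjointness of $(1+\Lc)^{-1}$. The definition~\eqref{eq:def-op} of $\|D^2X\|_{\op}$ and the operator-norm bound $\|(1+\Lc)^{-1}\|\le 1$ then yield the pointwise estimate $\|DF\|_\Hf\le 2\|D^2X\|_{\op}\|DX\|_{\Hf}$, and Cauchy–Schwarz on $\expec{\|DF\|_\Hf^2}$ gives $\var{F}\le 4\,\expec{\|D^2X\|_{\op}^4}^{1/2}\expec{\|DX\|_\Hf^4}^{1/2}$, whence the claim.

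\textbf{Main obstacle.} The routine pieces are items~(i),~(ii) and the derivative bound in~(iii); these reduce to algebra on chaos decompositions and the Leibniz rule. The genuinely delicate step is the Stein–Malliavin step: the total variation bound requires the sharp solution of Stein's equation for an isonormal Gaussian process, while the $2$-Wasserstein bound is of a different nature and needs the Mehler-semigroup interpolation of~\cite{LNP-15}, where the difficulty is to control the transport cost by the same Malliavin quantity (and not merely an $\Ld^1$ moment of its fluctuation). This is where I would lean most heavily on the references, as the authors themselves do.
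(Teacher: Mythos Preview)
The paper does not prove this proposition in the text; it simply states ``A proof and precise references are included in~\cite[Appendix~A]{DO1}.'' Your outline for items~(i), (ii), and the first inequality in~(iii) is the standard route (chaos decomposition together with Stein's integration by parts and the transport interpolation of~\cite{LNP-15}) and is essentially what the cited references do, so there is nothing to object to there.

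There is, however, a genuine gap in your ``Reduction to the second derivative'' step. First, the identity $DF=2(D^2X)\cdot(1+\Lc)^{-1}DX$ is not correct: the Leibniz rule gives
\[
DF=\langle D^2X,(1+\Lc)^{-1}DX\rangle_\Hf+\langle DX,(2+\Lc)^{-1}D^2X\rangle_\Hf,
\]
via $D(1+\Lc)^{-1}=(2+\Lc)^{-1}D$ from~\eqref{eq:commut-DL}, and the two terms do \emph{not} combine into a single one with a factor~$2$ (self-adjointness of $(1+\Lc)^{-1}$ is irrelevant here since the two operators differ). Second, and more seriously, your pointwise bound $\|DF\|_\Hf\le 2\|D^2X\|_\op\|DX\|_\Hf$ fails: the contraction property $\|(1+\Lc)^{-1}\|\le 1$ holds only in $\Ld^2(\Omega)$, not $\omega$-by-$\omega$. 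For a scalar Gaussian $G$, take $X=G+\tfrac12(G^2-1)$; then $DX=1+G$ and $(1+\Lc)^{-1}DX=1+\tfrac12 G$, and at $G=-1$ one has $|DX|=0<\tfrac12=|(1+\Lc)^{-1}DX|$. The correct argument (as in~\cite{NPR-09} and in~\cite[Appendix~A]{DO1}) writes $(1+\Lc)^{-1}=\int_0^\infty e^{-t}T_t\,dt$ with the Ornstein--Uhlenbeck semigroup $T_t$, uses the Mehler formula $T_tZ(\omega)=\E'[Z(e^{-t}\omega+\sqrt{1-e^{-2t}}\,\omega')]$ to realize $T_tDX$ as a conditional expectation over an independent copy~$\omega'$, and only \emph{then} applies Cauchy--Schwarz in $\Ld^2(\Omega\times\Omega')$. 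This produces the $\Ld^4$--$\Ld^4$ bound directly, without any pointwise step. You should replace your final paragraph by this Mehler-based argument.
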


For later purposes, it is useful to transform the norm of $\Hf$ into a suitable Lebesgue norm. This is a variant of the Hardy-Littlewood-Sobolev inequality.

\begin{lem}[Hardy-Littlewood-Sobolev inequality]\label{lem:est-Hf}
For all $h\in C^\infty_c(\R^d)^\kappa$,
\[\|h\|_\Hf\,\lesssim\,
\left\{
\begin{array}{lll}
\|h\|_{\Ld^2(\R^d)}&:&\beta>d,\\
\|\log(2+|\cdot|)^\frac12h\|_{\Ld^{2}(\R^d)}&:&\beta=d,\\
\|h\|_{\Ld^\frac{2d}{2d-\beta}(\R^d)}&:&\beta<d.
\end{array}\right.
\qedhere\]
\end{lem}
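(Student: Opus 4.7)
The starting point is the explicit representation
\[\|h\|_\Hf^2 \,=\, \iint_{\R^d\times\R^d} h(x)\cdot c(x-y)\,h(y)\,dxdy,\]
together with the pointwise bound $|c(x)|\lesssim(1+|x|)^{-\beta}$ from~\eqref{eq:cov-as}. Majorizing by $|h(x)|\,|c(x-y)|\,|h(y)|$ reduces the analysis to a scalar positive bilinear form, and the claim splits naturally into three cases.

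The \emph{integrable case $\beta>d$} is immediate: $c\in\Ld^1(\R^d)$, so Young's convolution inequality gives $\|h\|_\Hf^2\le\|c\|_{\Ld^1}\|h\|_{\Ld^2}^2\lesssim\|h\|_{\Ld^2}^2$. The \emph{non-integrable case $\beta<d$} follows from classical Hardy-Littlewood-Sobolev: since $(1+|x-y|)^{-\beta}\le|x-y|^{-\beta}$ for $x\ne y$, the bilinear form is dominated by the Riesz-type form with kernel $|x-y|^{-\beta}$, and HLS with $\lambda=\beta$ yields the bound $\lesssim\|h\|_{\Ld^p}^2$ precisely when $2/p+\beta/d=2$, that is, $p=2d/(2d-\beta)$.

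The delicate step is the \emph{critical case $\beta=d$}, which is the endpoint of HLS. Setting $\phi(x):=\log^{1/2}(2+|x|)$ and substituting $g:=h\phi$, the target bound $\|h\|_\Hf^2\lesssim\|h\phi\|_{\Ld^2}^2$ reduces to the $\Ld^2$-boundedness of the integral operator with symmetric kernel $K(x,y):=|c(x-y)|/(\phi(x)\phi(y))$. I would invoke Schur's test with the weight $w(x):=\log^{-3/2}(2+|x|)$; by symmetry of $K$, the single Schur condition $\int K(x,y)w(y)\,dy\le A\,w(x)$ boils down to
\[\int_{\R^d}(1+|x-y|)^{-d}\log^{-2}(2+|y|)\,dy\,\lesssim\,\log^{-1}(2+|x|),\]
which I would verify for $|x|$ large by splitting the integration domain into the three regions $\{|y|\le|x|/2\}$, $\{|y|>|x|/2,\,|x-y|\le|x|\}$, and $\{|x-y|>|x|\}$, and applying in each of them the sharp endpoint estimate $\int_{B_R}(1+|z|)^{-d}dz\simeq\log(2+R)$.

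The main obstacle is identifying the correct Schur weight in the critical case. A naive weighted Cauchy-Schwarz, or the Schur test with $w\equiv1$ or $w=\phi^{\pm1}$, leads to borderline-divergent tail integrals of the form $\int_1^\infty r^{-1}(\log r)^{-\alpha}dr$ and fails for the natural exponents $\alpha\in\{1/2,1\}$; the exponent $3/2$ in $w=\phi^{-3}$ is precisely the smallest power making both the tail integral and the matching bound on $w(x)$ converge. Once this weight is pinned down, the region-by-region verification is a routine (if delicate) computation.
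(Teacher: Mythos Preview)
Your argument is correct. For $\beta>d$ and $\beta<d$ you do exactly what the paper does (Young/HLS). For the critical case $\beta=d$, however, you take a genuinely different route: you recast the estimate as $\Ld^2$-boundedness of the kernel $K(x,y)=|c(x-y)|/(\phi(x)\phi(y))$ and run a Schur test with the purely logarithmic weight $w=\log^{-3/2}(2+|\cdot|)$, reducing matters to the single integral estimate $\int(1+|x-y|)^{-d}\log^{-2}(2+|y|)\,dy\lesssim\log^{-1}(2+|x|)$. The paper instead performs two successive weighted Cauchy--Schwarz steps, the second one with the \emph{polynomial} weight $(1+|y|)^{1/2}$, and relies (twice, via Fubini) on the different auxiliary bound $\int(1+|y|)^{-1}(1+|x-y|)^{-d}\,dy\lesssim(1+|x|)^{-1}\log(2+|x|)$. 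Your Schur-test packaging is arguably cleaner once the weight is found; the paper's iterated Cauchy--Schwarz is more hands-on but avoids naming Schur's test. One small inaccuracy in your commentary: the exponent $3/2$ is not ``precisely the smallest'' --- any $w=\log^{-\mu}(2+|\cdot|)$ with $\mu>1/2$ makes the tail integral converge and the Region~2 and Region~3 contributions match the target $\log^{1/2-\mu}(2+|x|)$; your choice $\mu=3/2$ is simply a convenient one. This does not affect the validity of the proof.
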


\begin{proof}
For $\beta<d$, the estimate is a direct consequence of the Hardy-Littlewood-Sobolev inequality. For $\beta>d$, the inequality $2ab\le a^2+b^2$ implies
\[\int_{\R^d}\int_{\R^d}\frac{|h(x)||h(y)|}{(1+|x-y|)^\beta}\,dxdy\,\le\,\int_{\R^d}\int_{\R^d}\frac{|h(x)|^2}{(1+|x-y|)^\beta}\,dxdy\,\simeq\,\|h\|_{\Ld^2(\R^d)}^2.\]
We turn to the critical case $\beta=d$. Smuggling in the weight $\log(2+|x|)^\frac12$ and using Cauchy-Schwarz' inequality,
\begin{multline*}
\int_{\R^d}\int_{\R^d}\frac{|h(x)||h(y)|}{(1+|x-y|)^d}\,dxdy\\
\,\le\,\|\log(2+|\cdot|)^\frac12h\|_{\Ld^2(\R^d)}\bigg(\int_{\R^d}\log(2+|x|)^{-1}\Big(\int_{\R^d}\frac{|h(y)|}{(1+|x-y|)^d}dy\Big)^2dx\bigg)^\frac12.
\end{multline*}
Smuggling in the weight $(1+|y|)^\frac12$ and using Cauchy-Schwarz' inequality again,
\begin{multline*}
\int_{\R^d}\int_{\R^d}\frac{|h(x)||h(y)|}{(1+|x-y|)^d}\,dxdy
\,\le\,\|\log(2+|\cdot|)^\frac12h\|_{\Ld^2(\R^d)}\\
\times\bigg(\int_{\R^d}\log(2+|x|)^{-1}\Big(\int_{\R^d}\frac{(1+|y|)|h(y)|^2}{(1+|x-y|)^d}dy\Big)\Big(\int_{\R^d}\frac{dy}{(1+|y|)(1+|x-y|)^{d}}\Big)dx\bigg)^\frac12.
\end{multline*}
The last integral in brackets is controlled by $C\frac{\log(2+|x|)}{1+|x|}$, so that by Fubini's theorem,
\begin{multline*}
\int_{\R^d}\int_{\R^d}\frac{|h(x)||h(y)|}{(1+|x-y|)^d}\,dxdy
\,\lesssim\,\|\log(2+|\cdot|)^\frac12h\|_{\Ld^2(\R^d)}\\
\times\bigg(\int_{\R^d}(1+|y|)|h(y)|^2\Big(\int_{\R^d}\frac{dx}{(1+|x|)(1+|x-y|)^d}\Big)dy\bigg)^\frac12.
\end{multline*}
Using again that the last integral in brackets is controlled by $C\frac{\log(2+|y|)}{1+|y|}$, the conclusion follows.
\end{proof}

\subsection{Tools from quantitative stochastic homogenization}

Next to the corrector $\phi$, we recall the notion of the flux corrector $\sigma$.
The pair $(\phi,\sigma)$ is only defined up to an additive (random) constant and we choose the standard anchoring $\fint_B(\phi,\sigma)=0$ on the unit ball~$B$ at the origin.

\begin{lem}[Correctors, e.g.~\cite{GNO-reg}]\label{si}
Let the coefficient field $\Aa$ be stationary and ergodic (as is the case if $\Aa$ is Gaussian with parameter $\beta>0$). Then there exist two random tensor fields
$(\phi_i)_{1\le i\le d}$ and $(\sigma_{ijk})_{1\le i,j,k\le d}$ with the following properties:
\begin{enumerate}[$\bullet$]
\item The gradient fields $\nabla\phi_i$ and $\nabla\sigma_{ijk}$ are stationary\footnote{That is, shift-covariant: $\nabla\phi_i(\cdot+z;\Aa)=\nabla\phi_i(\cdot;\Aa(\cdot+z))$ and $\nabla\sigma_{ijk}(\cdot+z;\Aa)=\nabla\sigma_{ijk}(\cdot;\Aa(\cdot+z))$ almost everywhere in~$\R^d$, for all shift vectors $z\in\mathbb{R}^d$.}
and have finite second moments and vanishing expectations.
\item For all $i$ the matrix field $\sigma_i=(\sigma_{ijk})_{1\le j,k\le d}$ is skew-symmetric (that is, $\sigma_{ijk}=-\sigma_{ikj}$).
\item The following equations are satisfied a.s.\@ in the distributional sense on $\R^d$,
\begin{equation}\label{eq:corr}
\quad
-\nabla\cdot \Aa(\nabla\phi_i+e_i)\,=\,0,\qquad
\nabla\cdot\sigma_i\,=\,q_i,\qquad
-\triangle\sigma_{ijk}\,=\,\partial_jq_{ik}-\partial_kq_{ij},
\end{equation}
where $q_i=(q_{ij})_{1\le j\le d}$ denotes the centered flux,
\[\quad q_i:=\Aa(\nabla\phi_i+e_i)-\bar\Aa e_i,\qquad\bar\Aa\ee_i:=\expec{\Aa(\nabla\phi_i+\ee_i)}.\]
\end{enumerate}
In addition Meyers's higher-integrability result holds in the following form: there exists $\delta\simeq1$ such that $\expec{|(\nabla\phi,\nabla\sigma)|^{2(1+\delta)}}\lesssim1$.
\end{lem}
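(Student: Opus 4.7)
The plan is to construct the corrector pair $(\phi,\sigma)$ via stationary calculus and then upgrade $\Ld^2$ moments to $\Ld^{2(1+\delta)}$ via a Meyers-Gehring argument.

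First, for each direction $\ee_i$ I construct the gradient $\nabla\phi_i$ as the unique element of the Hilbert space of mean-zero stationary potential $\Ld^2(\Omega)$ vector fields $\Hf_{\operatorname{pot}}:=\overline{\{\nabla\psi:\psi\in H^1_\ho(\Omega)\}}^{\Ld^2(\Omega)^d}$ solving the corrector equation weakly, that is, $\expec{(\nabla\phi_i+\ee_i)\cdot\Aa\nabla\psi}=0$ for all $\nabla\psi\in\Hf_{\operatorname{pot}}$. Existence and uniqueness follow from Lax-Milgram using the ellipticity and boundedness~\eqref{f.56}. The function $\phi_i$ itself is reconstructed as the unique sublinear potential with $\fint_B\phi_i=0$; sublinearity comes from ergodicity of $\Aa$. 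The centered flux $q_i:=\Aa(\nabla\phi_i+\ee_i)-\bar\Aa\ee_i$ is then stationary, square-integrable, of vanishing expectation, and satisfies $\nabla\cdot q_i=0$ almost surely (this is precisely~\eqref{eq:corr}).

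Next, I define $\nabla\sigma_{ijk}$ as the unique stationary potential $\Ld^2$ solution of $-\triangle\sigma_{ijk}=\partial_jq_{ik}-\partial_kq_{ij}$, which again exists uniquely by Lax-Milgram on $\Hf_{\operatorname{pot}}$ because the right-hand side is the divergence of a stationary square-integrable field of vanishing expectation. Skew-symmetry $\sigma_{ijk}=-\sigma_{ikj}$ is built into the definition. The identity $\nabla\cdot\sigma_i=q_i$ follows formally from
\[
\partial_j\sigma_{ijk}\,=\,-\triangle^{-1}\partial_j\big(\partial_jq_{ik}-\partial_kq_{ij}\big)\,=\,-\triangle^{-1}\big(\triangle q_{ik}-\partial_k\nabla\cdot q_i\big)\,=\,q_{ik},
\]
using $\nabla\cdot q_i=0$; one rigorously justifies this identity at the level of stationary gradient fields in $\Hf_{\operatorname{pot}}$, and then recovers $\sigma_{ijk}$ as a function by integration with anchoring $\fint_B\sigma_{ijk}=0$.

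For Meyers's higher integrability, I apply Caccioppoli's inequality to the corrector equation on every ball $B_r(x)$ and combine it with Sobolev's embedding to obtain a reverse Hölder inequality for $|\nabla\phi_i+\ee_i|^2$. Gehring's lemma then yields a deterministic exponent $\delta\simeq1$ and a local estimate
\[
\Big(\fint_B|\nabla\phi_i+\ee_i|^{2(1+\delta)}\Big)^{\frac1{1+\delta}}\,\lesssim\,\fint_{2B}|\nabla\phi_i+\ee_i|^2+1,
\]
which after taking expectation and using stationarity gives $\expec{|\nabla\phi_i|^{2(1+\delta)}}\lesssim1$. The analogous bound for $\nabla\sigma_{ijk}$ follows from the standard Calderón-Zygmund estimate applied to $-\triangle\sigma_{ijk}=\partial_jq_{ik}-\partial_kq_{ij}$ in $\Ld^{2(1+\delta)}$, controlling $\nabla\sigma$ by $q$, hence by $\nabla\phi+\ee$.

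The principal technical point is the construction of $\sigma$: one must simultaneously ensure that $\nabla\sigma$ is stationary and $\Ld^2$-integrable, that $\sigma$ is skew-symmetric, and that $\nabla\cdot\sigma_i=q_i$ holds pointwise almost surely. Working entirely inside $\Hf_{\operatorname{pot}}$ with the divergence-free condition on $q_i$ circumvents the apparent obstruction; once this is in place, the remaining assertions (sublinearity after anchoring and Meyers's estimate) are routine consequences of ergodicity and classical elliptic regularity.
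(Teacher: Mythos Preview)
The paper does not actually prove this lemma: it is stated as a preliminary result quoted from the literature (the reference~\cite{GNO-reg}), with no argument given. Your sketch is a correct outline of precisely the standard construction found there --- Lax--Milgram on the space of stationary potential fields for $\nabla\phi_i$, then the same for $\nabla\sigma_{ijk}$ via the auxiliary Poisson problem, followed by the Caccioppoli--Gehring route to Meyers integrability and constant-coefficient Calder\'on--Zygmund for $\nabla\sigma$. One cosmetic remark: in your reverse H\"older inequality the ``$+1$'' on the right-hand side is superfluous, since $\phi_i+x\cdot\ee_i$ is $\Aa$-harmonic and the Caccioppoli--Sobolev combination already gives the homogeneous estimate for $|\nabla\phi_i+\ee_i|$; this does not affect the conclusion.
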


We recall the moment bounds satisfied by correctors in the present Gaussian setting.
For the corrector gradients, the stochastic integrability (i.e.\@ dependence on~$p$) is optimal.

\begin{lem}[Corrector estimates, \cite{AKM2,GNO-reg,GNO-quant}]\label{lem:cor}
Let the coefficient field $\Aa$ be Gaussian with parameter $\beta>0$ and let $\mu_{d,\beta}$ be as
in~\eqref{mudbeta}.
Then, the extended corrector $(\phi,\sigma)$ satisfies for all $1\le p<\infty$,
\[\expecm{[(\nabla \phi,\nabla \sigma)]_2^p}^\frac1p \,\lesssim \,
\left\{\begin{array}{lll}
p^\frac12&:&\beta>d,\\
(p\log p)^\frac12&:&\beta=d,\\
p^\frac{d}{2\beta}&:&\beta<d,
\end{array}\right.\]
and for all $x\in \R^d$,
\[\expecm{[(\phi,\sigma)]_2(x)^p}^\frac1p\,\lesssim_{p}\,
\mu_{d,\beta}(|x|).
\qedhere\]
\end{lem}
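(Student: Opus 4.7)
The plan is to combine a sensitivity (Malliavin) analysis with respect to the underlying Gaussian field $G$ with deterministic large-scale regularity for the corrector equation, following the strategy developed in~\cite{GNO-reg, GNO-quant, AKM2}.

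\textbf{Step 1 (Sensitivity of the corrector).} Using the decomposition $c=c_0\ast c_0$ from Definition~\ref{def:coeff}, write $G=c_0\ast\xi$ for a white noise $\xi$, so that perturbations of $G$ in a direction $h\in\Hf$ correspond to additive perturbations of $G$ by $c_0\ast h$. Differentiating the corrector equation $-\nabla\cdot\Aa(\nabla\phi_i+\ee_i)=0$ in this direction yields
\[-\nabla\cdot\Aa\nabla\langle D\phi_i,h\rangle_\Hf\,=\,\nabla\cdot\bigl(\partial a_0(G)(c_0\ast h)\,(\nabla\phi_i+\ee_i)\bigr).\]
An analogous identity holds for $\sigma$, using the Poisson equation for $\sigma_{ijk}$ in Lemma~\ref{si}. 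Thus $D\nabla\phi_i$ and $D\nabla\sigma_i$ are the gradient fields of solutions to random elliptic equations with right-hand sides localized at scale $1$ by $c_0$ and weighted by $\nabla\phi+e$.

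\textbf{Step 2 (Moments of the corrector gradient).} For a test field $g\in C^\infty_c(B)$, set $X:=\int g\cdot\nabla\phi_i$. Iterating the first-order Poincar\'e inequality of Proposition~\ref{prop:Mall}(i) (equivalently, using Gaussian concentration for the isonormal process $G$) delivers sub-Gaussian moments:
\[\expec{|X-\expec{X}|^{2p}}^{1/(2p)}\,\lesssim\,\sqrt{p}\,\Bigl\|\|DX\|_\Hf\Bigr\|_{\Ld^{2p}(\Omega)}.\]
From Step 1, one represents the sensitivity $\langle DX,h\rangle_\Hf$ via a Green-function integral for $-\nabla\cdot\Aa\nabla$. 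Combining annealed Meyers and large-scale $C^{1,\alpha}$-regularity for the random operator with the Hardy--Littlewood--Sobolev inequality of Lemma~\ref{lem:est-Hf} yields $\|DX\|_\Hf\lesssim\|g\|_{\Ld^q}[\nabla\phi+e]_2$-type bounds, where the dual exponent is $q=2$ for $\beta>d$, $q=2$ (with logarithmic weight) for $\beta=d$, and $q=\tfrac{2d}{d+\beta}$ for $\beta<d$. Duality over $g$ and a buckling argument closing on $[\nabla\phi]_2$ gives the stated bounds. The $p$-scaling follows directly: the $\Ld^2$-reduction costs only $p^{1/2}$ in the integrable regime, while HLS forces the higher Lebesgue exponent $\tfrac{2d}{2d-\beta}$ in the non-integrable regime and yields the worse stochastic integrability $p^{d/(2\beta)}$; the critical case gives the $\log p$ correction.

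\textbf{Step 3 (Growth of the corrector itself).} Apply the same framework to $X=\int_{B(x)}(\phi,\sigma)$. This time the sensitivity is represented through the Green function $\calG_\Aa$ of $-\nabla\cdot\Aa\nabla$ (and the standard Laplace Green function for $\sigma$); annealed large-scale estimates give $|\nabla_y\calG_\Aa(x,y)|\lesssim|x-y|^{1-d}$ on scales $\ge 1$. Testing the resulting $\Hf$-norm against this kernel --- weighted by $c$, decaying algebraically with exponent $\beta$ --- and integrating produces exactly the four regimes in~\eqref{mudbeta}: the integral saturates when both $\beta>2$ and $d>2$; it picks up a factor $\log^{1/2}(2+|x|)$ at each of the critical thresholds $\beta=2$ or $d=2$; a double log arises at $(\beta,d)=(2,2)$; and for $\beta<2$ one obtains power growth $|x|^{1-\beta/2}$.

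\textbf{Main obstacle.} The delicate part is achieving the sharp stochastic integrability in Step~2 and the sharp spatial growth $\mu_{d,\beta}$ in Step~3 \emph{simultaneously}. Both rely crucially on annealed large-scale regularity for the random operator $-\nabla\cdot\Aa\nabla$ (which itself must be bootstrapped from the moment bounds being proved), and on careful HLS bookkeeping via Lemma~\ref{lem:est-Hf} --- in particular, the logarithmic weight must be kept in the critical case $\beta=d$ to avoid sub-optimal losses, and one must verify that the buckling argument indeed closes despite the weaker $\Hf$-norm in the non-integrable regime.
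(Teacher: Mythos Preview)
The paper does not prove this lemma: it is quoted from the literature~\cite{AKM2,GNO-reg,GNO-quant} and stated without proof, so there is no ``paper's own argument'' to compare against. Your sketch follows the strategy of those references --- Malliavin sensitivity of the corrector, $\Ld^p$-Poincar\'e (Gaussian concentration), large-scale regularity, and HLS-type control of the $\Hf$-norm via Lemma~\ref{lem:est-Hf} --- and is a fair high-level summary of that approach.

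A couple of points of imprecision worth flagging. First, the white-noise representation $G=c_0\ast\xi$ is not needed and is not how the cited works proceed; one works directly with the Malliavin derivative on $\Hf$ as in Section~2.1 here, and the decay of $c_0$ enters only through bounds like Lemma~\ref{lem:est-Hf}. Second, in Step~2 the ``buckling'' is not on $[\nabla\phi]_2$ directly but on the minimal radius (or the corrector growth) via large-scale regularity; the sharp $p$-dependence $p^{d/(2\beta)}$ for $\beta<d$ comes from the multiscale logarithmic-Sobolev/spectral-gap framework in~\cite{GNO-quant} rather than from a single application of HLS, and your one-line derivation of it is more heuristic than rigorous. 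Third, in Step~3 the annealed Green-function decay you invoke is itself a consequence of the corrector bounds you are proving, so the circularity you acknowledge under ``Main obstacle'' is real and is resolved in~\cite{GNO-reg,GNO-quant} by a careful bootstrap that your sketch does not spell out. None of this is wrong in spirit, but a reader should be pointed to the cited references for the actual argument.
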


Finally, we state a useful annealed Calder\'on-Zygmund estimate for the elliptic equation with random coefficients. This result is due to~\cite[Section~6]{DO1} and constitutes a useful upgrade of the quenched Calder\'on-Zygmund estimates of~\cite{Armstrong-Daniel-16,AKM-book,GNO-reg}.
To obtain the weighted estimates as stated below, it suffices to use~\cite[Theorem~3.4]{Shen-07} instead of~\cite[Theorem~3.2]{Shen-07} in the proof of~\cite{DO1}.
\begin{prop}[Annealed Calder\'on-Zygmund estimate, \cite{DO1}]\label{prop:ann}
For $h\in C^\infty_c(\R^d;\Ld^\infty(\Omega))^d$, the unique Lax-Milgram solution of
\[-\nabla\cdot\Aa\nabla z=\nabla\cdot h\]
satisfies for all $1<p,q<\infty$, all weights $w$ in the Muckenhoupt class $A_p$, and all $0<\delta\le\frac12$,
\begin{multline*}
\|[\nabla z]_2\|_{\Ld^p_w(\R^d;\Ld^{q}(\Omega))}\,\lesssim_{p,q,w}\,
\|[h]_2\|_{\Ld^p_w(\R^d;\Ld^{q+\delta}(\Omega))}\\
\times|\!\log\delta|^{2|\frac1{q}-\frac1p|}
\left\{\begin{array}{lll}
(\tfrac1\delta)^{\frac1{p\wedge q\wedge2}-\frac1{p\vee q\vee2}}&:&\beta>d,\\
(\tfrac1\delta|\!\log\delta|)^{\frac1{p\wedge q\wedge2}-\frac1{p\vee q\vee2}}&:&\beta=d,\\
(\tfrac1\delta)^{\frac d\beta(\frac1{p\wedge q\wedge2}-\frac1{p\vee q\vee2})}&:&\beta<d.
\end{array}\right.
\end{multline*}
In particular, in the regime $|\!\log\delta|(|\frac1p-\frac12|+|\frac1q-\frac12|)\lesssim1$, the constant in this estimate can be chosen independent of $\delta$.
\end{prop}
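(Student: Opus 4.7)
The plan is to follow the real-variable approach of \cite[Section~6]{DO1}, upgrading the unweighted Shen extrapolation theorem to its $A_p$-weighted counterpart. The solution map $T:h\mapsto\nabla z$ is viewed as a linear operator on the Banach-space valued Lebesgue space $\Ld^p_w(\R^d;\Ld^q(\Omega))$; the endpoint $p=q=2$ (with $w=1$) is the trivial energy estimate, and the goal is to propagate this baseline to the full range of exponents and Muckenhoupt weights.

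The backbone is Shen's real-variable interpolation theorem \cite[Theorem~3.4]{Shen-07}. Its hypotheses require verifying, on every ball, a pair of local estimates for the solution: a reverse-H\"older property on scales above a random \emph{minimal radius} $r_*(x)$, and a remainder estimate. Both follow from the quenched large-scale Calder\'on-Zygmund theory of \cite{Armstrong-Daniel-16,AKM-book,GNO-reg}, which provides $\Ld^r$-bounds for any $r<\infty$ on balls of radius at least $r_*(x)$, while on the complementary small-scale set one relies on the deterministic energy estimate. The abstract conclusion of Shen's theorem is then a weighted $\Ld^p_w(\R^d)$-bound with a \emph{random} constant involving powers of $r_*$. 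This step extends verbatim to the $\Ld^q(\Omega)$-valued setting since Shen's abstract framework only uses pointwise averages and good-$\lambda$ inequalities; the use of the weighted Theorem~3.4 in place of the unweighted Theorem~3.2 is the only change from \cite{DO1}.

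The $\delta$-dependent loss of stochastic integrability is extracted by applying H\"older's inequality to the random Shen constant with exponents $((q+\delta)/q,(q+\delta)/\delta)$: one copy of $[h]_2$ is moved to the right-hand side at the increased stochastic exponent $q+\delta$, while the expectation of a large power of $r_*$ is absorbed into the prefactor. By Lemma~\ref{lem:cor}, these moments grow in $p$ like $p^{1/2}$, $(p\log p)^{1/2}$, or $p^{d/(2\beta)}$ in the three regimes $\beta>d$, $\beta=d$, $\beta<d$, yielding the three advertised $\delta$-powers after optimizing the H\"older exponent in terms of $(p,q)$. The logarithmic factor $|\!\log\delta|^{2|1/q-1/p|}$ arises from Riesz-Thorin interpolation between the pairs $(p,q+\delta)$ and $(p,q)$ needed to close the argument, and the full range $1<p,q<\infty$ is recovered by duality, the adjoint equation having the same structure with $\Aa$ replaced by $\Aa^*$. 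The main obstacle is the simultaneous tracking of the Shen-iteration exponents, the H\"older split, and the interpolation, so as to obtain the sharp exponent $\tfrac1{p\wedge q\wedge 2}-\tfrac1{p\vee q\vee 2}$ without losses; the additional verification that Theorem~3.4 of \cite{Shen-07} applies to the $\Ld^q(\Omega)$-valued operator $T$ reduces to observing that its proof uses only pointwise and measure-theoretic manipulations on $\R^d$, which carry over to Banach-space valued functions.
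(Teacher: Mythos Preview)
Your proposal is correct and follows exactly the approach indicated in the paper: the paper does not give a self-contained proof of this proposition but simply refers to \cite[Section~6]{DO1} with the single modification that Shen's unweighted extrapolation \cite[Theorem~3.2]{Shen-07} be replaced by its $A_p$-weighted analogue \cite[Theorem~3.4]{Shen-07}, which is precisely what you outline. One minor point: your appeal to Lemma~\ref{lem:cor} for the moment bounds on the minimal radius $r_*$ is slightly imprecise, since that lemma is stated for corrector moments rather than for $r_*$ itself; the required stochastic integrability of $r_*$ is established in \cite{GNO-reg,GNO-quant,AKM-book} and yields the same scaling in $p$.
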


\section{Convergence of the covariance structure}\label{chap:conv-cov}

In this section, we establish the convergence of the covariance structure for the rescaled homogenization commutator, thus proving Theorem~\ref{th:main}(i). More precisely, we establish the following result.
Note that in the non-integrable case oscillations in the covariance structure of $G$ can break down the convergence. Likewise, convergence rates can be arbitrarily slow.

\begin{prop}\label{prop:conv-cov}
Let $\Aa=a_0(G)$ be Gaussian with parameter $\beta>0$.
For $1\le l,m\le \kappa$, define the matrix $\Kc^l$ by
\[\Kc_{ij}^{l}:=\expec{(\nabla\phi_j^*+\ee_j)\cdot \partial_la_0(G)(\nabla\phi_i+\ee_i)},\]
and define the measurable tensor field $K^{lm}$ of order $4$ on $\R^d$ by
\begin{multline*}
K_{iji'j'}^{lm}(x):=\E\Big[\big((\nabla\phi_j^*+\ee_j)\cdot \partial_la_0(G)(\nabla\phi_i+\ee_i)\big)(x)\\
(1+\Lc)^{-1}\big((\nabla\phi_{j'}^*+\ee_{j'})\cdot\partial_ma_0(G)(\nabla\phi_{i'}+\ee_{i'})\big)(0)\Big],
\end{multline*}
which satisfies $\|[K]_1\|_{\Ld^\infty(\R^d)}\lesssim1$.
\begin{enumerate}[(i)]
\item \emph{Integrable case $\beta>d$:}
For all $F,F'\in C^\infty_c(\R^d)^{d\times d}$,
\begin{multline*}
\qquad\Big|\cov{I_\e(F)}{I_\e(F')}-\int_{\R^d} F(x):\calQ:F'(x)\,dx\Big|\\
\,\lesssim_{F,F'}\,\left\{\begin{array}{lll}
\e&:&d>2,\,\beta\ge d+1,\\
\e\Log^\frac12&:&d=2,\,\beta\ge d+1,\\
\e^{\beta-d}&:&d<\beta<d+1,
\end{array}\right.
\end{multline*}
where the effective fluctuation tensor $\calQ$ is given by
\[\calQ_{iji'j'}:=\int_{\R^d} K_{iji'j'}^{lm}(x)\,c_{lm}(x)\,dx.\]
\item \emph{Critical case $\beta=d$:}
For all $F,F'\in C^\infty_c(\R^d)^{d\times d}$,
\begin{multline*}
\qquad\bigg|\cov{I_\e(F)}{I_\e(F')}
-\Big(\int_{\R^d}F(x):\Kc^l\otimes\Kc^m:F'(x)\,dx\Big)\,\Big(\frac1\Log\int_{|y|<\frac1\e}c_{lm}(y)\,dy\Big) \bigg|\\
\,\lesssim_{F,F'}\,\Log^{-1}.
\end{multline*}
In particular, the limit $\lim_{\e\downarrow0}\cov{I_\e(F)}{I_\e(F')}$ exists for all $F,F'$ if and only if the limit
\[\bar C_{lm}\,:=\,\lim_{L\uparrow\infty}\frac1{\log L}\int_{B_L}c_{lm}(y)\,dy\]
exists for all $l,m$ with $\Kc^l\ne0\ne\Kc^m$. In that case,
\[\lim_{\e\downarrow0}\,\cov{I_\e(F)}{I_\e(F')}=\int_{\R^d} F(x):\calQ:F'(x)\,dx,\]
where the effective fluctuation tensor $\calQ$ is given by
\[\calQ_{iji'j'}\,:=\,\Kc_{ij}^l\Kc^m_{i'j'}\bar C_{lm}.\]
\item \emph{Non-integrable case $\beta<d$:}
For all $F,F'\in C^\infty_c(\R^d)$,
\begin{multline*}
\qquad\bigg|\cov{I_\e(F)}{I_\e(F')}
-\int_{\R^d}\int_{\R^d}F(x):\Kc^l\otimes\Kc^m:F'(y)\,\tfrac1{\e^{\beta}}c_{lm}\big(\tfrac1\e (x-y)\big)\,dxdy\bigg|\\
\,\lesssim_{F,F'}\,\e\mu_{d,\beta}(\tfrac1\e)+ 
 \left\{\begin{array}{lll}
\e^{d-\beta}&:&\beta>\frac d2,\\
\e^\frac d2\Log&:&\beta=\frac d2,\\
\e^\beta&:&\beta<\frac d2.
\end{array}\right.
\end{multline*}
In particular, the limit $\lim_{\e\downarrow0}\cov{I_0^\e(F)}{I_0^\e(F')}$ exists for all $F,F'$ if and only if the function $L^{\beta}c_{lm}(L\cdot)$ converges weakly-* in $\Ld^\infty(\Sp^{d-1})$ to some function $C_{lm}$ as $L\uparrow\infty$ for all $l,m$ with $\Kc^l\ne0\ne\Kc^m$. In that case,
\[\lim_{\e\downarrow0}\,\cov{I_\e(F)}{I_\e(F')}\,=\,\int_{\R^d}\int_{\R^d} F(x):\frac{\calQ(\frac{x-y}{|x-y|})}{|x-y|^\beta}:F'(y)\,dxdy,\]
where the effective fluctuation tensor field $\calQ$ on $\Sp^{d-1}$ is given by
\[\calQ_{iji'j'}(u)\,:=\,\Kc_{ij}^l\Kc^m_{i'j'}C_{lm}(u).\qedhere\]
\end{enumerate}
\end{prop}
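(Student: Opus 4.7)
The natural starting point is the Helffer--Sj\"ostrand identity of Proposition~\ref{prop:Mall}(ii), which recasts the covariance as
\[\cov{I_\e(F)}{I_\e(F')} \,=\, \pi_{d,\beta}(\tfrac1\e)\iint F(x):\expecm{\langle D\Xi(x/\e),(1+\Lc)^{-1}D\Xi(y/\e)\rangle_\Hf}:F'(y)\,dx\,dy.\]
The first task is to compute $D\Xi_i$. Differentiating $\Aa=a_0(G)$ pointwise and differentiating the corrector equation~\eqref{eq:corr} for $\phi_i$ yields, in each direction $l\in\{1,\ldots,\kappa\}$,
\[D^l\Xi_i \,=\, \partial_la_0(G)(\nabla\phi_i+\ee_i)\,\delta_\bullet \,+\, (\Aa-\bar\Aa)\nabla D^l\phi_i,\]
where $D^l\phi_i$ solves the random elliptic PDE $-\nabla\cdot\Aa\nabla D^l\phi_i=\nabla\cdot\bigl(\partial_la_0(G)(\nabla\phi_i+\ee_i)\delta_\bullet\bigr)$. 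The second term is absorbed via duality: integrating against $F$, testing the PDE for $D^l\phi_i$ against the sublinear adjoint extension $\phi_j^*+x_j$, and using the skew-symmetry of the adjoint flux corrector $\sigma_j^*$ (cf.~Lemma~\ref{si}) to control the sublinear growth, one obtains the effective representation
\[DI_\e(F)_l \,\approx\, \pi_{d,\beta}(\tfrac1\e)^{1/2}\,\e^d\, F_{ij}(\e\bullet)\,A^l_{ij}(\bullet),\qquad A^l_{ij}:=(\nabla\phi_j^*+\ee_j)\cdot\partial_la_0(G)(\nabla\phi_i+\ee_i),\]
viewed as a random element of $\Hf$ in direction~$l$. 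Inserting this into HS and using stationarity produces, after the change of variable $h:=(x-y)/\e$, the leading expression
\[\cov{I_\e(F)}{I_\e(F')} \,\approx\, \pi_{d,\beta}(\tfrac1\e)\iint F_{ij}(x)\,F'_{i'j'}(x-\e h)\,K_{iji'j'}^{lm}(h)\,c_{lm}(h)\,dh\,dx,\]
the remainder being quantified via Proposition~\ref{prop:ann} together with the moment bounds of Lemma~\ref{lem:cor} and the $\Hf$-control of Lemma~\ref{lem:est-Hf}.

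\medskip

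The three regimes are then analyzed separately. In the integrable case $\beta>d$, $\pi_{d,\beta}(1/\e)=\e^{-d}$ and $|Kc|(h)\lesssim(1+|h|)^{-\beta}$ is integrable; Taylor-expanding $F'(x-\e h)$ around $F'(x)$ gives the effective tensor $\calQ_{iji'j'}=\int K^{lm}_{iji'j'}(h)c_{lm}(h)\,dh$, with the remainder governed by the competition between the $\e$-scale smoothness of $F'$ and the tail decay of $Kc$ --- the threshold $\beta=d+1$ separates the regime where the Taylor error $O(\e)$ dominates from the regime where the tail $O(\e^{\beta-d})$ dominates. In the critical case $\beta=d$ and the non-integrable case $\beta<d$, the integral $\int_{|h|\lesssim 1/\e}c_{lm}(h)\,dh$ diverges and $\pi_{d,\beta}(1/\e)$ provides the correct renormalization. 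The key ingredient is the decorrelation identity
\[K^{lm}(h)-\Kc^l\otimes\Kc^m \,=\, \cov{A^l(h)}{(1+\Lc)^{-1}A^m(0)}\,\longrightarrow\,0\qquad\text{as }|h|\uparrow\infty,\]
proved by applying HS a second time to this covariance and using Lemma~\ref{lem:cor} together with the decay of $c$ to bound the resulting $\Hf$-inner product. Once $K^{lm}$ is replaced by $\Kc^l\otimes\Kc^m$ in the tail, one recovers in the critical case the logarithmic average $\Log^{-1}\int_{|y|<1/\e}c_{lm}(y)\,dy$ weighted by $\Kc^l\otimes\Kc^m$, and in the non-integrable case, the rescaling $L^\beta c(L\cdot)\to C(\cdot/|\cdot|)|\cdot|^{-\beta}$ yields the fractional-kernel limit.

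\medskip

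The main obstacle is quantitative control of these two approximations at the announced sharp rates. On the one hand, the duality step replacing $DI_\e(F)$ by its effective form requires handling $\nabla D\phi$ through the annealed Calder\'on--Zygmund estimate of Proposition~\ref{prop:ann} in the spirit of~\cite{DO1}, with Lemma~\ref{lem:est-Hf} providing the appropriate weighted $L^p$ substitute for the $\Hf$-norm in each regime (hence the threshold cases~$\beta=d$ and~$\beta=d/2$ in the remainder). On the other hand, quantifying the decorrelation rate of $K^{lm}$ in the critical and non-integrable cases requires a careful second-order HS analysis, since the slow decay of $c$ precludes the finite-range arguments available in~\cite{DGO1,AKM2,GO4}; it is precisely there that the logarithmic corrector estimates of Lemma~\ref{lem:cor} and the sharp embedding of Lemma~\ref{lem:est-Hf} must be combined with the Muckenhoupt-weighted CZ of Proposition~\ref{prop:ann}.
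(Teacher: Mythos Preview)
Your proposal is correct and follows essentially the same route as the paper: the Helffer--Sj\"ostrand identity, the exact Malliavin-derivative representation via the $(\phi_j^*,\sigma_j^*)$ duality (the paper packages the remainder into an auxiliary field $z_{\e,j}$ solving $-\nabla\cdot\Aa^*\nabla z_{\e,j}=\nabla\cdot((\Aa^*\phi_j^*-\sigma_j^*)\nabla g_\e)$), the leading term $U_\e=\iint g_\e(x)g_\e(y)K^{lm}(x-y)c_{lm}(x-y)$, the decorrelation of $K^{lm}$ via a second HS, and the error estimates via annealed CZ with Lemmas~\ref{lem:est-Hf} and~\ref{lem:cor}. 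One technical point you may want to flag: in the critical case $\beta=d$ the paper does not use the Muckenhoupt-weighted CZ for the decorrelation rate but instead appeals directly to annealed Green's function bounds $\expecm{[\nabla_x\nabla_y G]_2^p(x,y)}^{1/p}\lesssim_p(1+|y-x|)^{-d}$ to obtain the optimal $\log^2(2+|x|)$ factor in~\eqref{eq:K-conv}.
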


\begin{proof}
By polarization,
it is enough to consider the case $F=F'=g\,\ee_i\otimes \ee_j$ for all $i,j$ and $g\in C^\infty_c(\R^d)$.
We aim at analyzing the limit of the variance
\[\nu_\e(g)\,:=\,\var{\pi_{d,\beta}(\tfrac1\e)^\frac12\int_{\R^d} g(x)\,\Xi_{ij}(\tfrac x\e)\,dx}\,=\,\var{\int_{\R^d} g_\e\,\Xi_{ij}},\]
where we have set $g_\e(x):=\e^{d}\pi_{d,\beta}(\tfrac1\e)^\frac12g(\e x)$.
We split the proof into five steps.

\medskip
\step{1} Representation formula for the Malliavin derivative of the homogenization commutator,
\begin{multline}\label{eq:form-D-Xi}
D\int_{\R^d} g_\e\Xi_{ij}=g_\e\,(\nabla\phi_j^*+\ee_j)\cdot \partial a_0(G) (\nabla\phi_i+\ee_i)\\
+(\nabla z_{\e,j}+\phi_j^*\nabla g_\e)\cdot\partial a_0(G)(\nabla\phi_i+\ee_i),
\end{multline}
where the auxiliary field $z_{\e,j}$ is the unique Lax-Milgram solution in $\R^d$ of
\begin{equation}\label{eq:def-zeps}
-\nabla\cdot\Aa^*\nabla z_{\e,j}=\nabla\cdot\big((\Aa^*\phi_j^*-\sigma_j^*)\nabla g_\e\big).
\end{equation}
Indeed, by definition of the homogenization commutator,
\begin{eqnarray*}
D\Xi_{ij}
&=&\ee_j\cdot D\Aa\,(\nabla\phi_i+\ee_i)+\ee_j\cdot(\Aa-\bar\Aa)\nabla D\phi_i.
\end{eqnarray*}
Using the definition of the flux corrector $\sigma_j^*$ in the form $(\Aa^*-\bar\Aa^*)\ee_j=-\Aa^*\nabla\phi_j^*+\nabla\cdot\sigma_j^*$ and using the skew-symmetry of $\sigma_j^*$,
we find
\begin{eqnarray*}
D\Xi_{ij}&=&\ee_j\cdot D\Aa\,(\nabla\phi_i+\ee_i)+(\nabla\cdot\sigma_j^*)\cdot\nabla D\phi_i-\nabla\phi_j^*\cdot\Aa\nabla D\phi_i\\
&=&\ee_j\cdot D\Aa\,(\nabla\phi_i+\ee_i)-\nabla\cdot\big((\Aa\phi_j^*+\sigma_j^*)\nabla D\phi_i\big)+\phi_j^*\nabla\cdot\Aa\nabla D\phi_i.
\end{eqnarray*}
Using the corrector equation~\eqref{eq:corr} for $\phi_i$ in the form
\begin{equation}\label{eq:Dphii}
-\nabla\cdot\Aa\nabla D\phi_i=\nabla\cdot D\Aa(\nabla\phi_i+\ee_i),
\end{equation}
we deduce
\begin{equation*}
D\Xi_{ij}\,=\,(\nabla\phi_j^*+\ee_j)\cdot D\Aa(\nabla\phi_i+\ee_i)
-\nabla\cdot\big((\Aa\phi_j^*+\sigma_j^*)\nabla D\phi_i\big)-\nabla\cdot\big( \phi_j^*D\Aa(\nabla \phi_i+\ee_i)\big).
\end{equation*}
Integrating with the test function $g_\e$ yields
\begin{multline*}
D\int_{\R^d}g_\e\Xi_{ij}=\int_{\R^d}g_\e\,(\nabla\phi_j^*+\ee_j)\cdot D\Aa(\nabla\phi_i+\ee_i)+\int_{\R^d}\nabla g_\e\cdot(\Aa\phi_j^*+ \sigma_j^*)\nabla D\phi_i\\
+\int_{\R^d}\phi_j^*\nabla g_\e\cdot D\Aa(\nabla\phi_i+\ee_i).
\end{multline*}
Using the equation for $z_{\e,j}$, the skew-symmetry of $\sigma_j^*$, and  the corrector equation for $\phi_i$ in the form~\eqref{eq:Dphii}, we may reformulate the second right-hand side term as
\[\int_{\R^d} \nabla g_\e\cdot(\Aa\phi_j^*+ \sigma_j^*)\nabla D\phi_i=-\int_{\R^d} \nabla z_{\e,j}\cdot\Aa \nabla D\phi_i=\int_{\R^d} \nabla z_{\e,j}\cdot D\Aa(\nabla\phi_i+e_i).\]
Further noting that
\begin{equation}\label{eq:Da}
D_z\Aa=\partial a_0(G(z))\,\delta(\cdot-z),
\end{equation}
the claim~\eqref{eq:form-D-Xi} follows (since $a_0$ is Lipschitz).

\medskip
\step2 Application of the Helffer-Sj\"ostrand identity.

\noindent
By Proposition~\ref{prop:Mall}(ii), we may  represent the variance $\nu_\e(g)$ as
\begin{equation*}
\nu_\e(g)\,=\,\expec{\bigg\langle D\Big(\int_{\R^d} g_\e\Xi_{ij}\Big)\,,\,(1+\Lc)^{-1}D\Big(\int_{\R^d} g_\e\Xi_{ij}\Big)\bigg\rangle_{\Hf}\,}.
\end{equation*}
By~\eqref{eq:form-D-Xi}, the boundedness of $(1+\Lc)^{-1}$ on $\Ld^2(\Omega;\Hf)$, and the stationarity of $(\nabla\phi_j^*+\ee_j)\cdot \partial_la_0(G) (\nabla\phi_i+\ee_i)$, recalling that~$\Lc$ commutes with shifts, this leads to
\begin{equation}\label{eq:decomp-rep-diff-nu12+cov}
|\nu_\e(g)-U_\e|\le 2(S_\e T_\e)^\frac12+T_\e,
\end{equation}
in terms of
\begin{eqnarray}
U_\e&:=&\int_{\R^d} \int_{\R^d} g_\e(x)g_\e(y)\,K_{ijij}^{lm}(x-y)\,c_{lm}(x-y)\,dxdy,\label{e.defUeps}\\
S_\e&:=&\expec{\big\|g_\e(\nabla\phi_j^*+\ee_j)\cdot\partial a_0(G)(\nabla\phi_i+\ee_i)\big\|_{\Hf}^2},\label{e.defSeps} \\
T_\e&:=&\expec{\big\|(\nabla z_{\e,j}+\phi_j^*\nabla g_\e)\cdot\partial a_0(G)(\nabla\phi_i+\ee_i)\big\|_{\Hf}^2}, \label{e.defTeps}
\end{eqnarray}
where $K$ is the tensor field defined in the statement of the proposition and where we recall that $z_{\e,j}$ is defined in~\eqref{eq:def-zeps}.

\medskip
\step3 Properties of $K$: we show that
\begin{eqnarray}
\|[K]_1\|_{\Ld^\infty}&\lesssim&1,\label{eq:K-bound}\\
|(K_{ijij}^{lm})_1(x)-\Kc_{ij}^l\Kc_{ij}^m|&\lesssim&(1+|x|)^{-\beta}\times\left\{\begin{array}{lll}1
&:&\beta< d,\\
\log^2(2+|x|)&:&\beta=d,\end{array}\right.\label{eq:K-conv}
\end{eqnarray}
where for a measurable function $G$ on $\R^d$ we use the following short-hand notation for the local average,
\[(G)_1(x):=\fint_{B} \fint_{B} G(x+y+y')\,dydy'.\]
We start with~\eqref{eq:K-bound}: by stationarity, the boundedness of $(1+\Lc)^{-1}$ on $\Ld^2(\Omega)$, and the corrector estimates of Lemma~\ref{lem:cor}, recalling that~$\Lc$ commutes with shifts and that $a_0$ is Lipschitz, we find
\begin{eqnarray*}
[K_{ijij}^{lm}]_1(x)&\lesssim& \fint_{B_2} \fint_{B_2} |K_{ijij}^{lm}(x+y+y')| dydy'
\\
&\le& \mathbb E \bigg[\Big(\fint_{B_2(x)}\big|(\nabla\phi_j^*+\ee_j)\cdot \partial_la_0(G)(\nabla\phi_i+\ee_i)\big|\Big)
\\
&&\qquad\times\Big(\fint_{B_2}\big|(1+\Lc)^{-1}\big((\nabla\phi_{j}^*+\ee_{j})\cdot\partial_ma_0(G)(\nabla\phi_{i}+\ee_{i})\big)\big|\Big)\bigg]
\\
&\lesssim &\expec{[\nabla\phi^*+\Id]_2^4}^\frac12\expec{[\nabla\phi+\Id]_2^4}^\frac12\,\lesssim \, 1.
\end{eqnarray*}
We turn to~\eqref{eq:K-conv}.
Since the Gaussian field $G$ is strongly mixing (as the covariance function decays at infinity), and  since the identity $\Lc1=0$ and the essential self-adjointness of~$\Lc$ ensure $\expec{(1+\Lc)^{-1}u}= \expec{u}$ for all $u\in\Ld^2(\Omega)$, it directly follows from the stationarity of $(\nabla\phi_j^*+\ee_j)\cdot \partial_la_0(G)(\nabla\phi_i+\ee_i)$ that
\begin{eqnarray*}
\lefteqn{\lim_{|x|\uparrow\infty} (K_{ijij}^{lm})_1(x)}\nonumber\\
&=&\expec{(\nabla\phi_j^*+\ee_j)\cdot \partial_la_0(G)(\nabla\phi_i+\ee_i)}\expec{(1+\Lc)^{-1}\big((\nabla\phi_j^*+\ee_j)\cdot\partial_ma_0(G)(\nabla\phi_i+\ee_i)\big)} 
\\
&=&\expec{(\nabla\phi_j^*+\ee_j)\cdot \partial_la_0(G)(\nabla\phi_i+\ee_i)}\expec{(\nabla\phi_j^*+\ee_j)\cdot \partial_ma_0(G)(\nabla\phi_i+\ee_i)}
\\
&=&\Kc_{ij}^{l}\Kc_{ij}^{m},
\end{eqnarray*}
and it remains to establish a convergence rate.
Starting from
\begin{multline*}
(K^{lm}_{ijij})_1(x)-\Kc_{ij}^l\Kc_{ij}^m\,=\,\Cov\bigg[\Big(\fint_{B(x)}(\nabla\phi_j^*+\ee_j)\cdot\partial_la_0(G)(\nabla\phi_i+\ee_i)\Big);\\
(1+\Lc)^{-1}\Big(\fint_{B}(\nabla\phi_j^*+\ee_j)\cdot\partial_ma_0(G)(\nabla\phi_i+\ee_i)\Big)\bigg],
\end{multline*}
the Helffer-Sjöstrand identity of Proposition~\ref{prop:Mall}(ii) together with the commutation relation~\eqref{eq:commut-DL} leads to
\begin{multline}\label{eq:K-diff-cov}
(K^{lm}_{ijij})_1(x)-\Kc_{ij}^l\Kc_{ij}^m
=\,\E\bigg[\Big\langle D\Big(\fint_{B(x)}(\nabla\phi_j^*+\ee_j)\cdot\partial_la_0(G)(\nabla\phi_i+\ee_i)\Big),\\
(1+\Lc)^{-1}(2+\Lc)^{-1}D\Big(\fint_{B}(\nabla\phi_j^*+\ee_j)\cdot\partial_ma_0(G)(\nabla\phi_i+\ee_i)\Big)\Big\rangle_\Hf\bigg].
\end{multline}
(Note indeed that~\eqref{eq:commut-DL} yields $(1+\Lc)^{-1}D=(2+\Lc)^{-1}D$.)
Since $D_z\partial a_0(G)=\partial^2a_0(G(z))\,\delta(\cdot-z)$, the Malliavin derivative of the factors is evaluated as follows,
\begin{multline*}
D_z\Big(\fint_{B(x)}(\nabla\phi_j^*+\ee_j)\cdot\partial a_0(G)(\nabla\phi_i+\ee_i)\Big)\\
\,=\,|B|^{-1}\mathds{1}_{z\in B(x)} \big((\nabla\phi_j^*+\ee_j)\cdot \partial^2a_0(G)(\nabla\phi_i+\ee_i)\big)(z)\\
+\fint_{B(x)}(\nabla\phi_j^*+\ee_j)\cdot\partial a_0(G)\nabla D_z\phi_i+\fint_{B(x)}\nabla D_z\phi_j^*\cdot\partial a_0(G)(\nabla\phi_i+\ee_i).
\end{multline*}
Convolving with $c_0$ and recalling the corrector equation for $\phi_i$ in the form~\eqref{eq:Dphii} together with~\eqref{eq:Da},
\begin{multline*}
\int_{\R^d} c_0(z-z')\,D_{z'}\Big(\fint_{B(x)}(\nabla\phi_j^*+\ee_j)\cdot\partial a_0(G)(\nabla\phi_i+\ee_i)\Big)\,dz'\\
\,=\,\fint_{B(x)}c_0(z-\cdot)\,(\nabla\phi_j^*+\ee_j)\cdot \partial^2a_0(G)(\nabla\phi_i+\ee_i)\\
+\fint_{B(x)}(\nabla\phi_j^*+\ee_j)\cdot\partial a_0(G)\nabla\Phi_{z,i}+\fint_{B(x)}\nabla\Phi_{z,j}^*\cdot\partial a_0(G)(\nabla\phi_i+\ee_i),
\end{multline*}
where $\Phi_{z,i}$ and $\Phi_{z,j}^*$ denote the unique Lax-Milgram solutions of
\begin{eqnarray}
-\nabla\cdot\Aa\nabla\Phi_{z,i}&=&\nabla\cdot\big(c_0(z-\cdot)\partial a_0(G)(\nabla\phi_i+\ee_i)\big),\label{eq:Phiz}\\
-\nabla\cdot\Aa^*\nabla\Phi_{z,j}^*&=&\nabla\cdot\big(c_0(z-\cdot)\partial a_0^*(G)(\nabla\phi_j^*+\ee_j)\big).\nonumber
\end{eqnarray}
Inserting this representation formula into the right-hand side of~\eqref{eq:K-diff-cov}, noting that the operator $(1+\Lc)^{-1}(2+\Lc)^{-1}$ is bounded in $\Ld^2(\Omega)$, 
and using the corrector estimates of Lemma~\ref{lem:cor}, we find
\begin{multline}\label{100.1}
|(K^{lm}_{ijij})_1(x)-\Kc_{ij}^l\Kc_{ij}^m|
\,\lesssim\,\int_{\R^d}\Big(|c_0(z-x)|+\expec{[\nabla\Phi_{z}]_2^4(x)+[\nabla\Phi_{z}^*]_2^4(x)}^\frac14\Big)\\
\times \Big(|c_0(z)|+\expec{[\nabla\Phi_{z}]_2^4(0)+[\nabla\Phi_{z}^*]_2^4(0)}^\frac14\Big)\,dz.
\end{multline}
We expand the product appearing in the right-hand side and only treat one of the terms, showing that
\begin{multline}
J_\beta(x):=\int_{\R^d}\expec{[\nabla\Phi_{z}]_2^4(x)}^\frac14\expec{[\nabla\Phi_{z}]_2^4(0)}^\frac14\,dz\\
\,\lesssim\,(1+|x|)^{-\beta}\times\left\{\begin{array}{lll}1
&:&\beta< d,\\
\log^2(2+|x|)&:&\beta=d,\end{array}\right.\label{eq:K-conv+}
\end{multline}
while the other terms are similar.
Noting that $\Phi_{z,i}(\cdot;\Aa)=\Phi_{0,i}(\cdot-z;\Aa(\cdot+z))$, we find
\[J_\beta(x)=\int_{\R^d}\expec{[\nabla\Phi_{0}]_2^4(x+z)}^\frac14\expec{[\nabla\Phi_{0}]_2^4(z)}^\frac14\,dz.\]
We start with the case $\beta<d$.
Smuggling in the weight $(1+|z|)^{\frac{\beta+d}{4}} (1+|x+z|)^{-\frac{\beta+d}{4}}$ and applying Cauchy-Schwarz' inequality,
\[J_\beta(x)\lesssim\int_{\R^d}\big(1+|z-x|\wedge|z+x|\big)^{-\frac{\beta+d}{2}} (1+|z|)^{\frac{\beta+d}{2}}\expec{[\nabla\Phi_{0}]_2^4(z)}^\frac12dz.\]
Since the weight $z \mapsto (1+|z-x|\wedge|z+x|)^{-\frac{\beta+d}2}(1+|z|)^{\frac{\beta+d}2}$ belongs to the Muckenhoupt class $A_2$, applying the weighted annealed Calder\'on-Zygmund estimate of Proposition~\ref{prop:ann} to equation~\eqref{eq:Phiz}, and using the corrector estimates of Lemma~\ref{lem:cor}, we find for $\beta<d$,
\begin{eqnarray*}
J_\beta(x)\,\lesssim\,\int_{\R^d}\big(1+|z-x|\wedge|z+x|\big)^{-\frac{\beta+d}{2}} (1+|z|)^{\frac{\beta+d}{2}}c_0(z)^2dz\,\lesssim\,(1+|x|)^{-\beta},
\end{eqnarray*}
that is,~\eqref{eq:K-conv+}.

\smallskip\noindent
Finally, we turn to the proof of~\eqref{eq:K-conv+} in the critical case $\beta=d$.
In order to obtain the optimal power of the logarithm, we rather use the Green's representation formula for $\nabla\Phi_0$  and appeal to annealed bounds on the Green's function~\cite{MaO,GMa,AKM-book,GNO-reg,BG-18} in the form
\[\expec{[\nabla_x\nabla_y G]_2^p(x,y)}^{\frac1p} \,\lesssim_p\,(1+|y-x|)^{-d},\]
for $1\le p<\infty$.
Together with the corrector estimates of Lemma~\ref{lem:cor} and with the decay assumption~\eqref{eq:cov-as-bis}, this leads to
\[\expec{[\nabla\Phi_0]_2^4(x)}^\frac14\,\lesssim\,\int_{\R^d}(1+|x-y|)^{-d}|c_0(y)|\,dy\,\lesssim\,\frac{\log^\frac12(2+|x|)}{(1+|x|)^{d}},\]
hence,
\begin{equation*}
J_d(x)\,\lesssim\,\int_{\R^d}\frac{\log^\frac12(2+|x+z|)}{(1+|x+z|)^{d}}\frac{\log^\frac12(2+|z|)}{(1+|z|)^{d}}\,dz
\,\lesssim\,\frac{\log^2(2+|x|)}{(1+|x|)^{d}},
\end{equation*}
that is,~\eqref{eq:K-conv+}.

\medskip

\step4 Limit of $U_\e$ (cf.~\eqref{e.defUeps}).
\nopagebreak

\noindent
We start with the integrable case $\beta>d$. By definition of $\pi_{d,\beta}$,  a change of variables yields
\begin{eqnarray*}
U_\e&=&\e^d\int_{\R^d} \int_{\R^d} g(\e x)g(\e y)K_{ijij}^{lm}(x-y)c_{lm}(x-y)\,dxdy\\
&=&\int_{\R^d} \int_{\R^d} g(x+\e y)g(x)K_{ijij}^{lm}(y)c_{lm}(y)\,dxdy.
\end{eqnarray*}
Since $[K]_1$ is bounded (cf.~\eqref{eq:K-bound}) and $\int_{\R^d} [c]_\infty\lesssim1$, we deduce by dominated convergence,
\begin{align*}
\lim_{\e\downarrow0}U_\e=\|g\|_{\Ld^2(\R^d)}^2\int_{\R^d} K_{ijij}^{lm}(y)c_{lm}(y)\,dy.
\end{align*}
More precisely, splitting $\|g\|_{\Ld^2(\R^d)}^2=\frac12\int_{\R^d}(|g(x)|^2+|g(x+\e y)|^2)\,dx$, we find
\begin{eqnarray*}
\lefteqn{\bigg|U_\e-\|g\|_{\Ld^2}^2\int_{\R^d} K_{ijij}^{lm}(y)c_{lm}(y)\,dy\bigg|}\\
&\le&\frac12\int_{\R^d} \int_{\R^d}|g(x+\e y)-g(x)|^2|K(y)||c(y)|\,dxdy\\
&\le& \frac12\int_{\R^d} \bigg(\int_{\R^d}\Big(\sup_{y'\in B(y)} |g(x+\e y')-g(x)|^2\Big)dx\bigg)\,[K]_1(y)[c]_\infty(y)\,dy \\
&\lesssim&\|g\|_{H^1(\R^d)}^2\int_{\R^d}(1\wedge|\e y|)^2[c]_\infty(y)\,dy\\
&\lesssim_g&
\e^{2\wedge(\beta-d)}\big(1+\Log\mathds1_{\beta=d+2}\big).
\end{eqnarray*}
We turn to the non-integrable case $\beta<d$. By definition of $\pi_{d,\beta}$, we find after rescaling,
\begin{eqnarray*}
U_\e&=&\e^{-\beta}\int_{\R^d}\int_{\R^d}g(x)g(y)\,K_{ijij}^{lm}\big(\tfrac1\e (x-y)\big)\,c_{lm}\big(\tfrac1\e (x-y)\big)dxdy\\
&=&\int_{\R^d}\int_{\R^d} \frac{g(x)g(y)}{|x-y|^\beta}K_{ijij}^{lm}\big(\tfrac1\e (x-y)\big)\,\big(\tfrac1\e|x-y|\big)^{\beta}c_{lm}\big(\tfrac1\e (x-y)\big)\,dxdy.
\end{eqnarray*}
Before applying~\eqref{eq:K-conv}, we take local averages and define
$$
\tilde U_\e\,:=\, \int_{\R^d}\int_{\R^d} \frac{g(x)g(y)}{|x-y|^\beta}(K_{ijij}^{lm})_1\big(\tfrac1\e (x-y)\big)\,\big(\tfrac1\e|x-y|\big)^{\beta}c_{lm}\big(\tfrac1\e (x-y)\big)\,dxdy,
$$
and we estimate the error
\begin{eqnarray*}
|\tilde U_\e-U_\e|&\lesssim&  \int_{\R^d}\int_{\R^d}\e^{1-\beta}\,[g]_\infty(x)[\nabla g]_\infty(y)\,[c]_\infty \big(\tfrac1\e (x-y)\big)\,dxdy
\\
&&+ \int_{\R^d}\int_{\R^d}\e^{-\beta} [g]_\infty(x)[g]_\infty(y) \,[\nabla c]_\infty \big(\tfrac1\e (x-y)\big)\,dxdy
\\
&\lesssim_g & \e,
\end{eqnarray*}
using the additional decay assumption $[\nabla c]_\infty(x)\lesssim (1+|x|)^{-\beta-1}$.
Next, we appeal to~\eqref{eq:K-conv} in the form
\begin{eqnarray*}
\lefteqn{\bigg|U_\e-\Kc_{ij}^l\Kc_{ij}^m\int_{\R^d} \int_{\R^d} \frac{g(x)g(y)}{|x-y|^\beta}\,\big(\tfrac1\e|x-y|\big)^{\beta}c_{lm}\big(\tfrac1\e (x-y)\big)\,dxdy\bigg|}\\
&\lesssim &|\tilde U_\e-U_\e|+{\bigg|\tilde U_\e-\Kc_{ij}^l\Kc_{ij}^m\int_{\R^d} \int_{\R^d} \frac{g(x)g(y)}{|x-y|^\beta}\,\big(\tfrac1\e|x-y|\big)^{\beta}c_{lm}\big(\tfrac1\e (x-y)\big)\,dxdy\bigg|}\\
&\lesssim_g&\e+\int_{\R^d}\int_{\R^d} \frac{|g(x)||g(y)|}{(\e+|x-y|)^{\beta}}\big|c\big(\tfrac1\e (x-y)\big)\big|\,dxdy\\
&\lesssim_g&\e+\e^\beta\int_{\R^d}\int_{\R^d} \frac{|g(x)||g(y)|}{(\e+|x-y|)^{2\beta}}\,dxdy\\
&\lesssim_g&\e+\e^{\beta\wedge(d-\beta)}\big(1+\Log\mathds1_{\beta=\frac d2}\big).
\end{eqnarray*}
It remains to analyze the critical case $\beta=d$. By definition of $\pi_{d,\beta}$, a change of variables yields
\begin{eqnarray*}
U_\e&=&\frac{\e^{d}}\Log\int_{\R^d} \int_{\R^d} g(\e x)g(\e y)K_{ijij}^{lm}(x-y)\,c_{lm}(x-y)\,dxdy\\
&=&\frac1\Log\int_{\R^d} \int_{\R^d} g(x+\e y)g(x)K_{ijij}^{lm}(y)c_{lm}(y)\,dxdy.
\end{eqnarray*}
Using the boundedness of $[K]_1$ (cf.~\eqref{eq:K-bound}) and the decay $|c(y)|\lesssim(1+|y|)^{-d}$, we find for~$p<2$,
\begin{multline*}
\lefteqn{\bigg|U_\e-\frac1\Log\int_{\R^d} \int_{|y|<\frac1\e} g(x+\e y)g(x)K_{ijij}^{lm}(y)c_{lm}(y)\,dxdy\bigg|}\\
\,\lesssim\,\frac{1}\Log\int_{\R^d} \int_{|y|>1} \frac{[g]_\infty(x+y)[g]_\infty(x)}{|y|^d}\,dxdy\,\lesssim_p\,\frac{\|[g]_\infty\|_{\Ld^p(\R^d)}^2}\Log,
\end{multline*}
hence,
\begin{eqnarray*}
\lefteqn{\bigg|U_\e-\frac{\|g\|_{\Ld^2}^2}\Log \int_{|y|<\frac1\e} K_{ijij}^{lm}(y)c_{lm}(y)\,dy\bigg|}\\
&\lesssim_p&\frac{\|[g]_\infty\|_{\Ld^p(\R^d)}^2}\Log+\frac1\Log\int_{\R^d} \int_{|y|<\frac1\e} \frac{\sup_{y'\in B(y)} |g(x+\e y')-g(x)|^2}{(1+|y|)^{d}}\,dxdy
\\
&\lesssim_{p,g}&\frac1\Log.
\end{eqnarray*}
Next, using~\eqref{eq:K-conv} as above, we conclude
\[\bigg|U_\e-\frac{\|g\|_{\Ld^2}^2\Kc_{ij}^l\Kc_{ij}^m}\Log\int_{|y|<\frac1\e}c_{lm}(y)\,dy\bigg|\,\lesssim_{p,g}\,\frac{1}\Log.\]

\medskip
\step4 Error estimates (cf.~\eqref{e.defSeps} and \eqref{e.defTeps}):
\begin{gather*}
S_\e\,\lesssim_g\,1\qquad\text{and}\qquad
T_\e\,\lesssim_g\,\e^2\mu_{d,\beta}(\tfrac1\e)^2.
\end{gather*}
We start with $S_\e$, and recall that 
\begin{align*}
S_\e:=~&\expec{\big\|g_\e(\nabla\phi_j^*+\ee_j)\cdot\partial a_0(G)(\nabla\phi_i+\ee_i)\big\|_{\Hf}^2}.
\end{align*}
By definition of the norm in $\Hf$, smuggling in local averages, we find
\begin{multline*}
\big\|g_\e(\nabla\phi_j^*+\ee_j)\cdot\partial a_0(G)(\nabla\phi_i+\ee_i)\big\|_{\Hf}^2
\,\lesssim\, \iint_{\R^d\times \R^d} \big([g_\e]_\infty [\nabla\phi^*+\Id]_2[\nabla\phi+\Id]_2\big)(x)\\
\times\big([g_\e]_\infty [\nabla\phi^*+\Id]_2[\nabla\phi+\Id]_2\big)(y)\,[c]_\infty(x-y)\,dxdy,
\end{multline*}
hence, by Lemma~\ref{lem:est-Hf} and the corrector estimates of~Lemma~\ref{lem:cor},
\begin{eqnarray*}
S_\e\,\lesssim\,
\left\{\begin{array}{lll}
\|[g_\e]_\infty \|_{\Ld^\frac{2d}{2d-\beta}(\R^d)}&:&\beta<d,\\
\|\log(2+|\cdot|)^\frac12[g_\e]_\infty\|_{\Ld^{2}(\R^d)}&:&\beta=d,\\
\|[g_\e]_\infty\|_{\Ld^2(\R^d)}&:&\beta>d,
\end{array}\right.
\end{eqnarray*}
and the claim $S_\e\lesssim_g1$ follows from the definition of $\pi_{d,\beta}$.
We turn to~$T_\e$, and recall that 
\begin{align*}
T_\e:=~&\expec{\big\|(\nabla z_{\e,j}+\phi_j^*\nabla g_\e)\cdot\partial a_0(G)(\nabla\phi_i+\ee_i)\big\|_{\Hf}^2}.
\end{align*}
In the integrable case $\beta>d$, Lemma~\ref{lem:est-Hf} and the corrector estimates of~Lemma~\ref{lem:cor}
similarly lead to
\begin{eqnarray*}
T_\e&\lesssim&\expec{\big\|[\nabla z_{\e}+\phi^*\nabla g_\e]_2[\nabla\phi+\Id]_2\big\|_{\Ld^2(\R^d)}^2}\\
&\lesssim&\|[\nabla z_{\e}]_2\|_{\Ld^2(\R^d;\Ld^4(\Omega))}^2+\|\mu_{d,\beta}[\nabla g_\e]_\infty\|_{\Ld^2(\R^d)}^2,
\end{eqnarray*}
while the annealed Calder\'on-Zygmund estimate of~Proposition~\ref{prop:ann} applied to equation~\eqref{eq:def-zeps} and combined with the corrector estimates of~Lemma~\ref{lem:cor} then implies
\[T_\e\,\lesssim\,\|\mu_{d,\beta}[\nabla g_\e]_\infty\|_{\Ld^2(\R^d)}^2\,\lesssim\,\e^2\mu_{d,\beta}(\tfrac1\e)^2\|\mu_{d,\beta}[\nabla g]_\infty\|_{\Ld^2(\R^d)}^2.\]
In the non-integrable case $\beta<d$, Lemma~\ref{lem:est-Hf} and the corrector estimates of~Lemma~\ref{lem:cor} rather lead to
\begin{eqnarray*}
T_\e&\lesssim&\expec{\big\|[\nabla z_{\e}+\phi^*\nabla g_\e]_2[\nabla\phi+\Id]_2\big\|_{\Ld^\frac{2d}{2d-\beta}(\R^d)}^2}\\
&\lesssim&\|[\nabla z_{\e}]_2\|_{\Ld^\frac{2d}{2d-\beta}(\R^d;\Ld^4(\Omega))}^2+\|\mu_{d,\beta}[\nabla g_\e]_\infty\|_{\Ld^\frac{2d}{2d-\beta}(\R^d)}^2,
\end{eqnarray*}
and we deduce as above
\[T_\e\,\lesssim\,\|\mu_{d,\beta}[\nabla g_\e]_\infty\|_{\Ld^\frac{2d}{2d-\beta}(\R^d)}^2\,\lesssim\,\e^2\mu_{d,\beta}(\tfrac1\e)^2\|\mu^*[\nabla g]_\infty\|_{\Ld^\frac{2d}{2d-\beta}(\R^d)}^2.\]
In the critical case $\beta=d$, the $\Ld^\frac{2d}{2d-\beta}$ norm is replaced by an $\Ld^2$ norm with logarithmic weight; the proof is then similar, appealing to the weighted version of the annealed Calder\'on-Zygmund estimate of~Proposition~\ref{prop:ann}.
\end{proof}

\section{(Non-)Degeneracy of the limiting covariance}\label{chap:deg}

In this section, we investigate the possible degeneracy of the limiting covariance structure. 
We only treat the symmetric setting, and we separately consider the integrable and non-integrable cases.
The non-symmetric setting is open.
We denote by $\Mcal$ the set of matrices $\bb\in\R^{d\times d}$ such that the boundedness and ellipticity properties~\eqref{f.56} are satisfied, that is, $|\bb\xi|\le|\xi|$ and $\xi\cdot\bb\xi\ge\lambda|\xi|^2$ for all $\xi\in\R^d$, and we denote by $\Mcal_\sym$ the subset of symmetric matrices in $\Mcal$.

\medskip
We start with the statements of the results: sufficient conditions for non-degeneracy and genericity of the non-degeneracy, both for
the case of integrable and non-integrable covariance. Proofs are postponed to the following subsections.

\medskip
In the integrable case $\beta>d$, recall that the effective fluctuation tensor $\calQ$ is defined in Proposition~\ref{prop:conv-cov}(i).
\begin{lem}\label{lem:non-deg}
Let $G$ be an $\R^\kappa$-valued Gaussian random field with an integrable covariance function $c$ that is of class $C^{2+\eta}$ in a neighborhood of the origin for some $\eta>0$,
and assume  
\begin{enumerate}
\renewcommand{\labelenumi}{\theenumi}
\renewcommand{\theenumi}{\textbf{(H\arabic{enumi})}}
\smallskip\item\label{pr-nondeg} \emph{Non-degeneracy of the covariance structure:}\\
If a stationary and centered random field $\psi\in\Ld^2(\Omega)^\kappa$ satisfies
\[\int_{\R^d}\expec{\psi_l(x)(1+\Lc)^{-1}\psi_m(0)}c_{lm}(x)\,dx\,=\,0,\]
then $\psi \equiv 0$.
\end{enumerate}
 Let $\Aa=a_0(G)$ with $a_0\in C^1_b(\R^\kappa;\Mcal_\sym)$.
If there exist $y,\alpha \in \R^\kappa$ such that the symmetric matrix $\alpha_l \partial_la_0(y)$ is definite, then $\calQ_{iiii}\ne 0$ for all~$1\le i\le d$.
\end{lem}

\noindent
Note that Property~\ref{pr-nondeg} trivially holds true if the Fourier transform $\hat c$ is pointwise positive, which is in particular compatible with the choice~\eqref{eq:cov-as}, and indeed provides many examples. 
Here comes the short argument. Setting $\Psi:=(1+\Lc)^{-1/2}\psi$ and $c_\Psi(x):=\expec{\Psi(x)\otimes\Psi(0)}$, the condition takes the form $\int_{\R^d}\hat c_\Psi:\hat c=0$ in Fourier space. Note that stationarity of $\Psi$ implies, for all $g\in C^\infty_c(\R^d)^\kappa$,
\[\int_{\R^d\times\R^d}g_l(x)g_m(y)\,(c_\Psi)_{lm}(x-y)\,dxdy=\expec{\Big|\int_{\R^d} g_l\Psi_l\Big|^2}\ge0,\]
hence Bochner's theorem ensures that the Fourier transform $\hat c_\Psi$ is a nonnegative measure. 
If $\hat c>0$ holds pointwise, the condition $\int_{\R^d}\hat c_\Psi:\hat c=0$ thus implies $\hat c_\Psi=0$, hence $\psi=0$, as claimed.
There is another trivial case when the property is satisfied.
As a consequence of an iterated use of the Helffer-Sjöstrand identity of Proposition~\ref{prop:Mall}(ii), it is also easily checked that Property~\ref{pr-nondeg} holds true when restricted to random fields of the form $\psi(x)=\psi_0(G(x))$ for a smooth function $\psi_0$; the corrector is of that special form in dimension $d=1$. 
We believe Property~\ref{pr-nondeg} might hold generically --- this constitutes an open question.

\medskip

The above condition for non-degeneracy is rather weak
and turns out to entail the generic non-degeneracy of the fluctuation tensor $\calQ$.
More precisely, given a Gaussian field $G$ with integrable covariance, there is a dense open set of transformations of the form $\Aa=a_0(G)$ that lead to a non-degenerate fluctuation tensor.
\begin{lem}\label{lem:non-deg-dense}
Let $G$ be an $\R^\kappa$-valued Gaussian random field with integrable covariance function, and let $s\ge0$.
For all $a_0\in C^\infty_b(\R^\kappa;\Mcal_\sym)$ there exists a sequence $(a_0^n)_n\subset C^\infty_b(\R^\kappa;\Mcal_\sym)$ such that $\Aa^n:=a_0^n(G)\to a_0(G)=:\Aa$ and $\partial^r a_0^n(G)\to \partial^r a_0(G)$
in $\Ld^p(\Omega)$ for all $r\in \N$ and $p<\infty$,
and such that for all $n$ the fluctuation tensor $\calQ^n$ associated with the coefficient field $\Aa^n$ is non-degenerate in the sense of $\calQ^n_{iiii}\ne0$ for all $1\le i\le d$.
For $s\ge1$, the convergence properties ensure $\bar\Aa^n\to\bar\Aa$ and $\calQ^n\to\calQ$.
\end{lem}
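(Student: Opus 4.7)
The plan is a one-parameter perturbation argument. Fix a reference map $b \in C^\infty_b(\R^\kappa;\Mcal_\sym)$ to which the sufficient condition of Lemma~\ref{lem:non-deg} applies; a concrete valid choice is
\[
b(y) \,:=\, \tfrac{1+\lambda}{2}\,\Id + \tfrac{1-\lambda}{4}\tanh(y_1)\,\Id,
\]
which at every $y$ is a positive multiple of $\Id$ with scalar in $[(1+3\lambda)/4,(3+\lambda)/4]\subset[\lambda,1]$, hence lies in $\Mcal_\sym$, and which satisfies $\partial_1 b(y) = \tfrac{1-\lambda}{4}\mathrm{sech}^2(y_1)\,\Id$, strictly positive definite at every $y$; in particular the hypothesis of Lemma~\ref{lem:non-deg} holds for $b$ with $\alpha=\ee_1$ at any $y$. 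Introduce the affine interpolation
\[
a_0^t \,:=\, (1-t)\,a_0 + t\,b, \qquad t\in[0,1],
\]
which lies in $C^\infty_b(\R^\kappa;\Mcal_\sym)$ for every $t\in[0,1]$ by convexity of $\Mcal_\sym$.

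The regularity and convergence parts are then immediate: since $\partial^r a_0^t - \partial^r a_0 = t\,(\partial^r b - \partial^r a_0)$ is uniformly bounded and tends to zero as $t\downarrow0$, for any sequence $t_n\downarrow0$ we obtain $\partial^r a_0^{t_n}(G)\to\partial^r a_0(G)$ in $\Ld^p(\Omega)$ for every $r\in\N$ and $p<\infty$. The convergences $\bar\Aa^n\to\bar\Aa$ and $\calQ^n\to\calQ$ then follow from the continuous dependence of correctors, homogenized coefficients, and fluctuation tensors on the coefficient field, by passing to the limit in the Malliavin representation of Proposition~\ref{prop:conv-cov}(i) via the uniform corrector moment bounds of Lemma~\ref{lem:cor} and the annealed Calder\'on-Zygmund estimate of Proposition~\ref{prop:ann}.

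The decisive step is to select $t_n\downarrow0$ with $\calQ^{t_n}_{iiii}\ne0$ for every $n$ and every $i$. My plan is to establish that the scalar function $t\mapsto\calQ^t_{iiii}$ is real-analytic on $[0,1]$. Granting this, Lemma~\ref{lem:non-deg} applied to $b=a_0^1$ (whose hypotheses are satisfied by construction, and whose covariance assumption is inherited from that of $c$) yields $\calQ^1_{iiii}>0$; hence the map is not identically zero, its zero set in $[0,1]$ is discrete, and any $t_n\downarrow 0$ in the complement produces $a_0^n:=a_0^{t_n}$ satisfying all requirements.

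The main obstacle is precisely this analyticity claim. Since $\Aa^t = a_0^t(G) = (1-t)\Aa + t\,b(G)$ is affine in $t$ almost surely, the corrector $\phi^t_i$ solving $-\nabla\cdot\Aa^t(\nabla\phi^t_i+\ee_i)=0$ admits the Neumann-type expansion
\[
\phi^t_i \,=\, \sum_{k\ge 0}(t-t_0)^k\,\phi^{t_0,k}_i
\]
around any $t_0\in[0,1]$, where $\phi^{t_0,k}_i$ is defined iteratively using the resolvent $(\nabla\cdot\Aa^{t_0}\nabla)^{-1}$ applied to the perturbation $\nabla\cdot((b(G)-a_0(G))\nabla\cdot)$. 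The uniform operator bounds afforded by Proposition~\ref{prop:ann} --- whose constants depend only on $\lambda$ and the covariance data, hence are uniform in $t_0\in[0,1]$ since $\Aa^{t_0}$ satisfies the ellipticity/boundedness estimates~\eqref{f.56} with parameters independent of $t_0$ --- yield absolute convergence of this series in $\Ld^p(\Omega;\Ld^q_\loc(\R^d))$ for arbitrarily large $p,q$ on a complex neighborhood of $t_0$. Analyticity of $\phi^t_i$ then propagates to $q^t_i$ and $\sigma^t_i$, and, through the representation of $K^{lm}_{iiii}$ and $\calQ^t_{iiii}$ in Proposition~\ref{prop:conv-cov}(i) (noting crucially that $\Lc$ does not depend on $t$), finally to the scalar $t\mapsto\calQ^t_{iiii}$. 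Rigorously assembling these uniform moment bounds into absolute convergence of the relevant series --- and thereby justifying the analytic extension --- is the delicate technical step.
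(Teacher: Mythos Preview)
Your route is genuinely different from the paper's, and considerably more elaborate. The paper never interpolates and never invokes analyticity. Instead it perturbs $a_0$ by a single small bump that escapes to infinity in the argument variable: with $\chi\in C^\infty_c((-\tfrac12,\tfrac12))$, $\chi'(0)=1$, it sets
\[
a_0^n(y)\,:=\,a_0(y)+2\,\Id\,\chi(y_1-n)\sup|\partial a_0|.
\]
At the single point $y=n\ee_1$ one then has $\partial_1 a_0^n(n\ee_1)=\partial_1 a_0(n\ee_1)+2\sup|\partial a_0|\,\Id$, which is symmetric positive definite, so Lemma~\ref{lem:non-deg} applies \emph{directly to each} $a_0^n$ and yields $\calQ^n_{iiii}\ne0$. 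The convergence $\partial^r a_0^n(G)\to\partial^r a_0(G)$ in $\Ld^p(\Omega)$ is immediate because $G_1$ is Gaussian and the bump sits near $y_1=n\to\infty$; the convergence of $\bar\Aa^n,\calQ^n$ then follows from the annealed Calder\'on--Zygmund estimate applied once to the corrector difference~\eqref{e:diff-corr}, exactly as you argue. So the paper checks the hypothesis of Lemma~\ref{lem:non-deg} for every term of the sequence, rather than for the endpoint only.

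Your argument hinges entirely on the real-analyticity of $t\mapsto\calQ^t_{iiii}$, and this is where the gap lies. The Neumann-series sketch is plausible but incomplete in two respects. First, Proposition~\ref{prop:ann} carries an unavoidable loss of stochastic integrability (from $\Ld^{q+\delta}$ to $\Ld^q$), so after $k$ iterations the bound reads in $\Ld^q(\Omega)$ only if the source lies in $\Ld^{q+k\delta}(\Omega)$; one must therefore argue uniformly in $k$, which the corrector moment bounds of Lemma~\ref{lem:cor} permit in principle but which you have not carried out. (The deterministic energy estimate, by contrast, gives a clean contraction in $\Ld^2(\R^d)$ realization-wise with radius $\ge\lambda/\|b(G)-a_0(G)\|_{\Ld^\infty}$, and this is the more robust starting point.) Second, even granting analyticity of $t\mapsto\nabla\phi^t_i$ in a suitable mixed space, you must still propagate it through the bilinear map $(\nabla\phi^{t,*}_j+\ee_j)\cdot\partial_l a_0^t(G)(\nabla\phi^t_i+\ee_i)$, through $(1+\Lc)^{-1}$, and through the $c$-weighted integral defining $\calQ$ in Proposition~\ref{prop:conv-cov}(i); each step is routine once the right uniform bounds are in hand, but none is written down. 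The paper's construction sidesteps all of this by never needing to know anything about $\calQ^t$ beyond its value at the discrete points $t=n$.
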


\medskip

In the non-integrable case $\beta<d$, by Proposition~\ref{prop:conv-cov}(iii), the fluctuation tensor field takes the form $\calQ_{ijij}(u):=\Kc_{ij}^l\Kc_{ij}^mC_{lm}(u)$. If for all $u$ the matrix $C(u)$ is positive definite (as would indeed follow from~\eqref{eq:cov-as}), the non-degeneracy of the fluctuation tensor field is equivalent to the non-vanishing of the tensor $\Kc$, for which the following trivial lemma establishes a sufficient condition.
\begin{lem}\label{lem:non-deg2}
Let $G$ be an $\R^\kappa$-valued Gaussian random field
and let $\Aa=a_0(G)$ with $a_0\in C^1_b(\R^\kappa;\Mcal_\sym)$.
Given $1\le l\le \kappa$, if the symmetric matrix $\partial_la_0(y)$ is definite for all $y\in\R^\kappa$, then $\Kc_{ii}^l\ne 0$ for all $1\le i\le d$.
\end{lem}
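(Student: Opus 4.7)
\medskip

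The plan is to exploit the symmetry hypothesis on $a_0$ and the definiteness of $\partial_l a_0$ to recognize $\Kc^l_{ii}$ as the expectation of a sign-definite quadratic form, and then rule out triviality by appealing to the basic defining properties of the corrector.

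First I would observe that in the symmetric setting $\Aa=\Aa^*$ we have $\phi_i^*=\phi_i$, so that the definition of $\Kc^l$ yields
\[
\Kc^l_{ii}\,=\,\expecm{(\nabla\phi_i+\ee_i)\cdot \partial_la_0(G)(\nabla\phi_i+\ee_i)},
\]
where, by stationarity, this expectation can be understood at any fixed point $x\in\R^d$ (say $x=0$). Next, since $a_0\in C^1_b$, the map $y\mapsto\partial_la_0(y)$ is continuous with values in the symmetric matrices; because it is assumed definite at every $y\in\R^\kappa$, continuity of eigenvalues forces it to have a fixed sign throughout. Up to changing signs, we may therefore assume $\partial_la_0(y)>0$ as a symmetric matrix for all $y\in\R^\kappa$. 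This makes the integrand in the display above pointwise nonnegative.

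The only way such a sign-definite expectation can vanish is for the integrand itself to vanish almost surely: since $\partial_la_0(G)$ is positive definite $\Pm$-a.s.\@ (its kernel is reduced to $\{0\}$), the equality $\Kc^l_{ii}=0$ would force
\[
\nabla\phi_i+\ee_i\,=\,0\qquad\text{a.s., a.e.\@ in }\R^d.
\]
This last identity is readily contradicted by taking the expectation: by Lemma~\ref{si}, the corrector gradient $\nabla\phi_i$ is stationary with $\expec{\nabla\phi_i}=0$, hence $\expec{\nabla\phi_i+\ee_i}=\ee_i\ne 0$. This contradiction yields $\Kc^l_{ii}\ne 0$ for every $1\le i\le d$, which is the desired conclusion.

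There is no real obstacle here: the argument is essentially a one-line convexity observation, and no delicate estimate enters. The only point requiring a brief justification is the ``uniform sign'' observation in the first step, which rests only on continuity of $\partial_la_0$ and connectedness of $\R^\kappa$.
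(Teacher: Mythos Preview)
Your proof is correct and follows exactly the same approach as the paper's: the paper also observes that continuity forces a uniform sign for $\partial_l a_0$ and then invokes the formula $\Kc^l_{ii}=\expec{(\nabla\phi_i+\ee_i)\cdot(\partial_l a_0)(G)(\nabla\phi_i+\ee_i)}$, simply leaving implicit the final step (which you spell out) that the definite quadratic form can only have vanishing expectation if $\nabla\phi_i+\ee_i\equiv 0$, contradicting $\expec{\nabla\phi_i}=0$.
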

Although the above sufficient condition is much more stringent than in the integrable case, it still implies that non-degeneracy is a generic property.
\begin{lem}\label{lem:non-deg-dense2}
Let $G$ be an $\R^\kappa$-valued Gaussian random field, and let $s\ge1$.
For all $a_0\in C^s_b(\R^\kappa;\Mcal_\sym)$ there exists a sequence $(a_0^n)_n\subset C^s_b(\R^\kappa;\Mcal_\sym)$
such that $\Aa^n:=a_0^n(G)\to a_0(G)=:\Aa$ and $\partial^r a_0^n(G)\to \partial^ra_0(G)$ in $\Ld^\infty(\Omega)$ for all $0\le r\le s$, and such that the tensor $\Kc^n$ associated with $\Aa^n$ is non-degenerate in the sense of $(\Kc^n)_{ii}^l\ne0$ for all $1\le i\le d$ and $1\le l\le\kappa$. The convergence properties ensure in particular $\bar\Aa^n\to\bar\Aa$ and $\Kc^n\to\Kc$.
\end{lem}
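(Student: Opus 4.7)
The plan is to homotope $a_0$ to a reference coefficient that manifestly satisfies the hypothesis of Lemma~\ref{lem:non-deg2}, and to use real analyticity of the relevant tensor along the homotopy parameter to extract a good approximating sequence.

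First, I construct a reference $\tilde a_0 \in C^s_b(\R^\kappa;\Mcal_\sym)$ whose derivatives are definite everywhere. A convenient choice is
\[
\tilde a_0(y) \,:=\, \mu_0 \Id + \sum_{l=1}^\kappa \eta_l\, \psi_l(y_l)\,\Id,
\]
with $\psi_l \in C^\infty_b(\R)$ strictly increasing and bounded, and $\mu_0, \eta_l>0$ chosen small enough so that $\tilde a_0$ takes values in $\Mcal_\sym$. By construction, $\partial_l \tilde a_0(y) = \eta_l \psi_l'(y_l) \Id$ is symmetric positive definite at every $y \in \R^\kappa$, so Lemma~\ref{lem:non-deg2} applies to $\tilde a_0$ and yields $(\tilde \Kc)^l_{ii} \ne 0$ for all $1\le i\le d$ and $1\le l \le \kappa$.

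Next, I form the convex homotopy $a_0^t(y) := (1-t) a_0(y) + t\, \tilde a_0(y)$ for $t\in[0,1]$. Since $\Mcal_\sym$ is convex and both endpoints lie in $C^s_b(\R^\kappa;\Mcal_\sym)$, so does $a_0^t$ for all $t \in [0,1]$, uniformly in $t$. The critical step is to show that, for each pair $(i,l)$, the scalar map $t \mapsto (\Kc^t)^l_{ii}$ is real analytic on a complex neighborhood of $[0,1]$, where $\Kc^t$ is built from $a_0^t$ as in Proposition~\ref{prop:conv-cov}. This reduces, via the symmetry assumption (so that $\phi^{t*}_i = \phi^t_i$) and the polynomial dependence $\partial_l a_0^t = (1-t)\partial_l a_0 + t\,\partial_l \tilde a_0$, to showing that the corrector gradient $\nabla\phi_i^t$ depends analytically on $t$ with values in a suitable stationary $\Ld^2(\Omega;\Ld^2_{\loc}(\R^d))$ space. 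This in turn follows from a Neumann-series perturbation of the corrector equation around any $t_0 \in [0,1]$: writing $a_0^t(G) = a_0^{t_0}(G) + (t-t_0) B$ with $B := (\tilde a_0 - a_0)(G) \in \Ld^\infty$, the difference $\eta^t := \phi_i^t - \phi_i^{t_0}$ satisfies a linear elliptic equation with right-hand side $(t-t_0)\nabla\cdot B(\nabla\phi_i^{t_0}+e_i)$, and uniform ellipticity of $a_0^t$ (controlled by $\lambda$) yields a convergent power series in $(t-t_0)$.

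Once analyticity is established, the set $Z := \bigcup_{i,l}\{t\in[0,1] : (\Kc^t)^l_{ii} = 0\}$ is at most countable, since each factor is a non-identically-zero analytic function (its value at $t=1$ is nonzero by Step 1). I then pick a sequence $t_n \downarrow 0$ with $t_n \in [0,1]\setminus Z$ and set $a_0^n := a_0^{t_n}$. The identity $a_0^n - a_0 = t_n(\tilde a_0 - a_0)$ shows $\|a_0^n - a_0\|_{C^s_b(\R^\kappa)} \lesssim t_n \to 0$, hence $\partial^r a_0^n(G) \to \partial^r a_0(G)$ in $\Ld^\infty(\Omega)$ for all $0\le r\le s$; by construction $(\Kc^n)^l_{ii}\ne 0$ for all $i,l$. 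The additional convergences $\bar\Aa^n\to\bar\Aa$ and $\Kc^n\to\Kc$ follow from the standard continuity of the homogenized coefficient and of expectations of corrector-based quadratic quantities under uniform convergence of coefficient fields (which itself rests on uniform corrector estimates as in Lemma~\ref{lem:cor}).

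The main obstacle is the analyticity step: one must verify that the perturbation series for $\phi_i^t$ converges in a stationary $\Ld^2$ framework on the whole space $\R^d$, and that moments of the form $\E[(\nabla\phi_i^t+e_i)\cdot \partial_l a_0^t(G)(\nabla\phi_i^t+e_i)]$ are therefore real analytic in $t$. Alternatively, if a full analyticity statement proves too demanding, it suffices to establish that each map $t\mapsto (\Kc^t)^l_{ii}$ has a zero set of empty interior, which can be achieved by an iterated differentiation argument using the Helffer-Sj\"ostrand identity from Proposition~\ref{prop:Mall}(ii), exploiting that the endpoint value at $t=1$ is nonzero.
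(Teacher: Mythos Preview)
Your approach is correct and genuinely different from the paper's. The paper proceeds by an explicit, rather delicate perturbation: it treats one pair $(i,l)$ at a time (iterating by a stability argument), and for each such pair it perturbs $a_0$ by $\tfrac1n b^\eta(G_l-z_0)\Id$ with a carefully designed \emph{asymmetric} bump $b^\eta$ (different scales on the two sides of its maximum). The argument then expands $(\Kc^{\eta,z_0,n})^l_{ii}$ to first order in $\tfrac1n$, and shows via conditional-expectation bounds, Meyers integrability, and energy estimates for corrector differences that one can choose $\eta,z_0$ so that the first-order term dominates and has a definite sign. Your route---linear homotopy to a reference $\tilde a_0$ with everywhere-definite partial derivatives, analyticity of $t\mapsto(\Kc^t)^l_{ii}$ via the Neumann series for correctors in the stationary $L^2$ framework, and selection of $t_n\downarrow0$ outside the (discrete) zero set---is considerably cleaner: it handles all pairs $(i,l)$ simultaneously and avoids the intricate pointwise analysis. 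The analyticity step is indeed routine once cast in the abstract corrector space (the bilinear form depends affinely on $t$ and is uniformly coercive on $[0,1]$, so the Lax--Milgram solution is operator-analytic), so your hedging about it being ``too demanding'' is unnecessary. What the paper's construction buys in return is explicitness: it produces perturbations supported on arbitrarily small $G$-level sets and gives a quantitative lower bound on $(\Kc^n)^l_{ii}$, whereas your argument is nonconstructive in the choice of $t_n$.
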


\subsection{Integrable case}
We start with the proof of the sufficient condition for non-degeneracy given by Lemma~\ref{lem:non-deg}.
\begin{proof}[Proof of Lemma~\ref{lem:non-deg}]
In the integrable case with $\Aa$ symmetric, according to Proposition~\ref{prop:conv-cov}(i), the fluctuation tensor is defined by 
\begin{multline}\label{eq:formula-Q}
\calQ_{iji'j'}\,=\,\int_{\R^d}\E\Big[\big((\nabla\phi_j+\ee_j)\cdot \partial_la_0(G)(\nabla\phi_i+\ee_i)\big)(x)\\
(1+\Lc)^{-1}\big((\nabla\phi_{j'}+\ee_{j'})\cdot\partial_ma_0(G)(\nabla\phi_{i'}+\ee_{i'})\big)(0)\Big] c_{lm}(x)\,dx.
\end{multline}
By Property~\ref{pr-nondeg}, we see that the condition $\calQ_{iiii}=0$ holds for some~$i$ if and only if $(\nabla\phi_i+\ee_i)\cdot \partial_la_0(G)(\nabla\phi_i+\ee_i)\equiv0$ for all~$l$.
Since $a_0$ is of class $C^1$, there exists by assumption an open neighborhood $U\subset\R^\kappa$ of $y$ such that $\alpha_l \partial_l a_0$ is definite on~$U$.
In particular, the condition $(\nabla\phi_i+\ee_i)\cdot \alpha_l \partial_la_0(G)(\nabla\phi_i+\ee_i)\equiv0$ implies 
$\nabla\phi_i+\ee_i\equiv0$ conditioned on the event that $G\in U$.
Since the covariance function $c$ is continuous at the origin, we find $\pr{\forall x\in B:G(x)\in U}>0$, where $B$ denotes the unit ball of $\R^d$ at the origin.
Hence, if $\calQ_{iiii}=0$ holds for some $i$, we deduce $\pr{\forall x\in B:\nabla\phi_i(x)+\ee_i=0}>0$.
As the covariance function $c$ is assumed to be of class $C^{2+\eta}$ at the origin for some $\eta>0$, it follows e.g.\@ from Dudley's metric entropy bounds~\cite{Dudley-67} that $G$ (hence $\Aa$) is almost surely locally Lipschitz continuous.
We may then apply analytic continuation for $\Aa$-harmonic functions (cf.~\cite{Garofalo-Lin-86}), which upgrades the above into $\pr{\nabla\phi_i+\ee_i\equiv 0}>0$. By ergodicity, this implies $\nabla \phi_i+\ee_i\equiv 0$ almost surely,
which leads to $0=\expec{\nabla \phi_i+\ee_i}=e_i$, a contradiction.
\end{proof}
In particular, in the case when the coefficient field $\Aa$ is diagonal, we deduce the following simplified sufficient condition, which extends the non-degeneracy observation of~\cite{MO,GN}  to the continuum setting.
\begin{cor}\label{cor:diag}
Let $\Aa$ be a diagonal coefficient field of the form $\Aa_{ii}=a_{0,i}(G_i)$ for some $a_{0,i}\in C^\infty_b(\R;[\lambda,1])$ and some $\R$-valued Gaussian random fields $G_i$ with integrable covariance function. If the Gaussian field $G=(G_i)_{i=1}^d$ is non-degenerate and if for all~$i$ the function $a_{0,i}$ is not uniformly constant, then $\calQ_{iiii}\ne 0$ for all $1\le i\le d$.
\end{cor}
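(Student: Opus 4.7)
The plan is to reduce the corollary to a direct application of Lemma~\ref{lem:non-deg}, using the simple structure of the derivative $\partial_l a_0$ coming from the diagonality of $a_0$. Here $\kappa=d$ and the hypothesis to verify is the existence of $y,\alpha\in\R^d$ such that $\alpha_l\,\partial_la_0(y)$ is a definite symmetric matrix.

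First, I would compute the partial derivatives of $a_0$. Since $(a_0(y))_{jk}=\delta_{jk}\,a_{0,j}(y_j)$, one has $(\partial_la_0(y))_{jk}=\delta_{jk}\delta_{jl}\,a_{0,l}'(y_l)$, so that $\partial_la_0(y)=a_{0,l}'(y_l)\,\ee_l\otimes\ee_l$. Therefore, for any $\alpha\in\R^d$,
\[\alpha_l\,\partial_la_0(y)\,=\,\dig{\alpha_1 a_{0,1}'(y_1),\ldots,\alpha_d a_{0,d}'(y_d)}.\]

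Second, since each $a_{0,i}\in C^\infty_b(\R;[\lambda,1])$ is not uniformly constant, the mean value theorem yields a point $y_i^\star\in\R$ with $a_{0,i}'(y_i^\star)\neq0$. Setting $y^\star:=(y_i^\star)_{i=1}^d$ and $\alpha_i:=\sgn\,a_{0,i}'(y_i^\star)$, the matrix $\alpha_l\,\partial_la_0(y^\star)$ is the diagonal matrix with strictly positive entries $|a_{0,i}'(y_i^\star)|$, hence positive definite. This verifies the definiteness hypothesis of Lemma~\ref{lem:non-deg}, and I would then invoke the lemma directly: $a_0\in C^\infty_b(\R^d;\Mcal_\sym)\subset C^1_b(\R^d;\Mcal_\sym)$, the covariance of $G$ is integrable and (by non-degeneracy) has the required regularity and positivity at the origin, and Property~\ref{pr-nondeg} is assumed throughout this subsection. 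The conclusion $\calQ_{iiii}\neq0$ for all $1\le i\le d$ follows.

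I foresee essentially no real obstacle: once the diagonal structure is exploited, the definiteness condition of Lemma~\ref{lem:non-deg} collapses to the coordinate-wise existence of a point with nonzero derivative for each $a_{0,i}$, which is immediate from the smoothness and non-constancy assumption. The only point that requires care is bookkeeping the choice of signs $\alpha_i$ to ensure that the resulting diagonal matrix is (say) positive definite rather than merely non-singular, which is what Lemma~\ref{lem:non-deg} demands.
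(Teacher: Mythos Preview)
Your proposal is correct and matches the paper's intended approach: the corollary is stated without proof precisely because it follows from Lemma~\ref{lem:non-deg} via the observation that the diagonal structure makes $\alpha_l\partial_la_0(y)$ a diagonal matrix, which is rendered definite by choosing each coordinate $y_i$ at a point where $a_{0,i}'\neq0$ and adjusting the signs $\alpha_i$. One small inaccuracy: non-degeneracy of $G$ only yields positive-definiteness of $c(0)$, not the $C^{2+\eta}$ regularity at the origin required by Lemma~\ref{lem:non-deg}; that regularity is a separate hypothesis, tacitly inherited from the ambient setting rather than implied by non-degeneracy.
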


Next, we deduce that the non-degeneracy of the fluctuation tensor $\calQ$ is a generic property, as stated in Lemma~\ref{lem:non-deg-dense}.
\begin{proof}[Proof of Lemma~\ref{lem:non-deg-dense}]
Let $\chi\in C^\infty(\R)$ be nonnegative and compactly supported in $(-\frac12,\frac12)$ with $\chi'(0)=1$.
For all $n\ge1$, define $a_0^n(G):=a_0(G)+2\Id\chi(G_1-n)\sup|\partial a_0|$.
Since $G_1$ is Gaussian, we find $\partial^r a_0^n(G)\to \partial^r a_0(G)$ in $\Ld^p(\Omega)$ for all $r\in \N$ and $p<\infty$.
Denote by $\phi^n$ the corrector associated with $\Aa^n$.
Considering the corrector equation~\eqref{eq:corr} in the form 
\begin{equation}\label{e:diff-corr}
-\nabla \cdot \Aa \nabla (\phi^n-\phi)=\nabla \cdot (\Aa^n-\Aa) (\nabla \phi^n+ \Id),
\end{equation}
we deduce from the annealed Calder\'on-Zygmund estimate of Proposition~\ref{prop:ann} that $[\nabla\phi^n-\nabla\phi]_2\to0$ in $\Ld^p(\Omega)$ for all $p<\infty$,
which easily entails $\bar\Aa^n\to \bar\Aa$ and $\calQ^n\to \calQ$.
It remains to notice that $\partial_1 a_0^n(n\ee_1)$ is symmetric positive definite,
so that $\calQ^n$ is non-degenerate by Lemma~\ref{lem:non-deg}.
\end{proof}

\subsection{Non-integrable case}

We first check the sufficient condition for non-degeneracy given by Lemma~\ref{lem:non-deg2}.

\begin{proof}[Proof of Lemma~\ref{lem:non-deg2}]
By continuity of $\partial a_0$, the assumption ensures that $\partial_la_0(y)$ is either positive definite for all $y\in\R^\kappa$, or negative definite.
The conclusion then follows from the formula 
\[\Kc_{ii}^l=\expec{(\nabla\phi_{i}+\ee_i)\cdot (\partial_la_0)(G)(\nabla\phi_{i}+\ee_i)}.\qedhere\]
\end{proof}
This sufficient condition is particularly stringent compared to Lemma~\ref{lem:non-deg} since it requires definiteness at all points rather than at one single point.
This result is complemented with examples of non-degenerate and degenerate fluctuation tensors.
Note that the degenerate example~(ii) below is in sharp contrast with Corollary~\ref{cor:diag}, which indeed states that if $G$ had integrable covariance then even in the situation of~(ii) the corresponding fluctuation tensor would be non-degenerate for all~$z$.
\begin{lem}
Let $G$ be an $\R$-valued Gaussian random field ($\kappa=1$) and let $\Aa=a_0(G)\Id$ with $a_0\in C^1_b(\R;[\lambda,1])$.
\begin{enumerate}[(i)]
\item If $|a_0'|>0$ on $\R$, then $\Kc_{ii}^1\ne0$ for all $1\le i \le d$.
\smallskip\item If $\sup a_0=1$ and if $a_0(y)\to\lambda$ as $|y|\uparrow\infty$,
then there exists $z_0 \in \R$ such that the fluctuation tensor $\Kc^{z_0}$ of the shifted coefficient field $\Aa^{z_0}:= a_0(G+z_0)$ satisfies $(\Kc^{z_0})^1_{ii}=0$ for all $1\le i\le d$.
\qedhere
\end{enumerate}
\end{lem}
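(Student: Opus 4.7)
Both parts rest on the simplification that in the present scalar symmetric setting ($\kappa=1$, $\Aa=a_0(G)\Id$) we have $\phi_j^*=\phi_j$ and $\partial_1 a_0=a_0'$, so the expression for $\Kc^1$ from Proposition~\ref{prop:conv-cov} collapses to
\[
\Kc^1_{ii}\,=\,\expec{a_0'(G)\,|\nabla\phi_i+\ee_i|^2}.
\]

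\textbf{For (i)}, the continuity of $a_0'$ together with the hypothesis $|a_0'|>0$ forces $a_0'$ to have constant sign on $\R$ by the intermediate value theorem. Without loss of generality $a_0'>0$, so the integrand is almost surely nonnegative, and it is strictly positive on the event $\{\nabla\phi_i+\ee_i\ne 0\}$, which has positive probability since $\expec{\nabla\phi_i+\ee_i}=\ee_i\ne 0$. Hence $\Kc^1_{ii}>0$. No further work is needed.

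\textbf{For (ii)}, the plan is to identify $\Kc_{ii}^1(z)$ (for the shifted field $\Aa^z:=a_0(G+z)\Id$) as the derivative in $z$ of a continuous function that attains an interior maximum. Write $\phi_i^z$ for the corrector associated with $\Aa^z$, set $f_i(z):=\ee_i\cdot\bar\Aa^z\ee_i$, and use the symmetric variational representation
\[
f_i(z)\,=\,\expec{a_0(G+z)\,|\nabla\phi_i^z+\ee_i|^2}.
\]
Differentiation in $z$ produces two terms: $\expecm{a_0'(G+z)|\nabla\phi_i^z+\ee_i|^2}=\Kc_{ii}^1(z)$, plus a cross-term involving $\nabla\partial_z\phi_i^z$ that vanishes by testing the corrector equation for $\phi_i^z$ against $\partial_z\phi_i^z$ (this is just the Euler–Lagrange cancellation for the variational problem). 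Hence $\Kc_{ii}^1(z)=f_i'(z)$.

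It remains to run an elementary soft analysis on $f_i:\R\to\R$. First, $f_i$ is continuous by stability of correctors under continuous coefficient perturbations. Second, $f_i(z)>\lambda$ for every $z$: indeed, since $\sup a_0=1>\lambda$ and $G+z$ has positive Gaussian density everywhere, $a_0(G+z)>\lambda$ on a set of positive probability, which by the variational formula forces $\bar\Aa^z>\lambda\,\Id$ in the sense of quadratic forms. Third, $f_i(z)\to\lambda$ as $|z|\to\infty$, by dominated convergence combined with the locally uniform convergence $a_0(\cdot+z)\to\lambda$ and the stability of homogenization under $\Ld^p$-convergence of coefficients. A continuous function $f_i>\lambda$ with $f_i(\pm\infty)=\lambda$ must attain an interior maximum at some $z_0\in\R$, at which $f_i'(z_0)=\Kc_{ii}^1(z_0)=0$.

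\textbf{Main obstacles.} The most delicate step is the identity $\Kc_{ii}^1(z)=f_i'(z)$, which requires controlling the $\Ld^2(\Omega)$-differentiability of $z\mapsto\nabla\phi_i^z$ and justifying the interchange of $\expec{\cdot}$ and $\tfrac{d}{dz}$; this can be done by differentiating the corrector equation in $z$ and invoking the annealed Calderón–Zygmund estimate of Proposition~\ref{prop:ann}. A secondary issue is producing a single $z_0$ that works simultaneously for every $1\le i\le d$: under isotropy of the covariance $c$, rotational invariance forces $\bar\Aa^z$ to be a scalar multiple of $\Id$, so all $f_i$ coincide and share their maximizer; more generally one picks $z_0$ to be an argmax of the continuous map $z\mapsto\operatorname{tr}(\bar\Aa^z)=\sum_i f_i(z)$, which at least enforces $\sum_i\Kc_{ii}^1(z_0)=0$ and, combined with suitable symmetry of $G$ or $a_0$, yields the stronger diagonal vanishing.
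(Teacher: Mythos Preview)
Your approach coincides with the paper's for both parts: item~(i) is the sign argument you give (the paper phrases it as a direct consequence of Lemma~\ref{lem:non-deg2}), and item~(ii) is exactly the computation $\Kc_{ii}^1(z)=\tfrac{d}{dz}(\bar\Aa^z)_{ii}$ via the Euler--Lagrange cancellation, followed by the observation that $(\bar\Aa^z)_{ii}>\lambda$ everywhere while $(\bar\Aa^z)_{ii}\to\lambda$ at infinity, so a maximum exists.

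The one point where the paper is more direct than you is the ``single $z_0$ for all $i$'' issue: the paper simply asserts that since $\Aa^z$ is a scalar multiple of $\Id$, so is $\bar\Aa^z$, whence the functions $f_i$ are independent of $i$ and share a common maximizer. Your instinct that this step deserves scrutiny is in fact well founded---a scalar coefficient field does \emph{not} in general yield a scalar homogenized matrix (think of a layered medium), and the implication genuinely requires isotropy of the law of $G$, which is not among the stated hypotheses. So the paper's argument is cleaner but rests on an unstated assumption; your trace-based fallback does not close the gap in the anisotropic case either.
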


\begin{proof}
Item (i) is a direct consequence of Lemma~\ref{lem:non-deg2}.
We turn to~(ii), for which we start with a reformulation of~$\Kc_{ii}^1$.
For $z\in\R$, we consider the Gaussian field $G+z$, the corresponding coefficient field $\Aa^z:=a_0(G+z)\Id$,  we denote by $\phi^z$ the solution of the associated corrector equation (cf.~\eqref{eq:corr}),
\[-\nabla\cdot\Aa^z(\nabla\phi^z_i+\ee_i)=0,\]
and we denote by $\bar\Aa^z$ the homogenized coefficient associated with $\Aa^z$.
We may then compute
\begin{multline*}
\nabla_z (\bar\Aa^z)_{ii}|_{z=0}\,=\,\nabla_{z}\big(\expec{(\nabla\phi_i^z+\ee_i)\cdot\Aa^z(\nabla\phi_i^z+\ee_i)}\big)\big|_{z=0}\\
\,=\,\expec{(\nabla\phi_i+\ee_i)\cdot a_0'(G)(\nabla\phi_i+\ee_i)}+\expec{\nabla(\nabla_{z}\phi_{i}^z|_{z=0})\cdot\Aa(\nabla\phi_i+\ee_i)}\\
+\expec{(\nabla\phi_i+\ee_i)\cdot\Aa\nabla(\nabla_{z}\phi_j^z|_{z=0})}.
\end{multline*}
The first right-hand side term coincides with $\Kc_{ii}^1$ while the last two terms vanish due to the corrector equation~\eqref{eq:corr}, so that the above takes the form
\[\Kc_{ii}^1\,=\,\nabla_z (\bar\Aa^z)_{ii}|_{z=0}.\]
Note that these quantities do not depend on $i$ since $\Aa$ (hence $\bar\Aa^z$) is a multiple of the identity.
On the one hand, since by assumption $a_0(G+z)\to\lambda$ almost surely as $|z|\uparrow\infty$, we deduce $(\bar\Aa^z)_{ii}\to\lambda$ as $|z|\uparrow\infty$.
On the other hand, the standard harmonic lower bound for homogenized coefficients yields $(\bar\Aa^z)_{ii}>\lambda$ for all $z\in \R$.
By continuity in $z$, there exists $z_0 \in \R$ such that $(\bar\Aa^{z_0})_{ii}$ is maximal. 
Since the map $z\mapsto (\bar\Aa^z)_{ii}$ is obviously of class $C^1$, we deduce $\nabla_z (\bar\Aa^z)_{ii}|_{z=z_0}=0$, that is, $(\Kc^{z_0})_{ii}^1=0$.
\end{proof}

Next, we prove that the non-vanishing of the tensor $\Kc$ is a generic property, as stated in Lemma~\ref{lem:non-deg-dense2}.
\begin{proof}[Proof of Lemma~\ref{lem:non-deg-dense2}]
Using estimates on differences of correctors as in the proof of Lem\-ma~\ref{lem:non-deg-dense}, if $\Kc^l_{ii}\ne0$ and if approximations $\Aa^n:=a_0^n(G)$ satisfy $\partial^ra_0^n(G)\to\partial^ra_0(G)$ in $\Ld^p(\Omega)$ for all $0\le r\le 1$ and $p<\infty$, then the tensors $\Kc^n$ associated with $\Aa^n$ also satisfy $(\Kc^n)^l_{ii}\ne0$ for all $n$ large enough.
Therefore, it suffices to prove the result for $i=1$ and $l=1$, while the result for all $1\le i\le d$ and $1\le l\le\kappa$ follows by successive applications.
If $a_0$ is such that $\Kc_{11}^1\ne0$, there is nothing to prove.
Let $a_0\in C^s(\R^\kappa;\Mcal_\sym)$ be fixed with $\Kc_{11}^1=0$.
Let $b\in C^\infty_b(\R)$ be chosen with the following properties,
\begin{enumerate}[\qquad$\bullet$]
\item $b(y)=e^{-|y|}$ for $|y|>\frac12$;
\item $b$ is increasing on $(-\infty,0)$ and decreasing on $(0,\infty)$;
\item $b(y)\le e^{-|y|}$ and $|b'(y)|\le e^{-|y|}$ for all $y$;
\item $\big(\tfrac d{dy}\big)^sb|_{y=0}=0$ for all $s\ge1$.
\end{enumerate}
Next, for all $\eta>0$, we define the following asymmetric rescaling of $b$,
\[b^\eta(y)\,:=\,\left\{\begin{array}{lll}
\eta b(\tfrac1\eta y)&:&y<0,\\
\eta^2b(\tfrac1{\eta^2}y)&:&y\ge0,
\end{array}\right.\]
and we note that $b^\eta\in C^\infty_b(\R)$.
For $\eta>0$, $z\in\R$, and $n\ge1$, we then consider the following perturbations of $\Aa=a_0(G)$,
\[\Aa^{\eta,z,n}\,:=\,\Aa+\tfrac1n\, b^{\eta}(G_1-z)\Id,\]
as well as the associated correctors $\phi^{\eta,z,n}$ and tensors $\Kc^{\eta,z,n}$.
Expanding the perturbation and using energy estimates for differences of correctors~\eqref{e:diff-corr}, we find
\begin{multline*}
(\Kc^{\eta,z,n})_{11}^1\,=\,\Kc_{11}^1+2\, \expec{(\nabla \phi_1+\ee_1)\cdot \partial_la_0(G) \nabla(\phi^{\eta,z,n}_1-\phi_1)}
\\
+\tfrac1n\, \expec{|\nabla \phi_1+\ee_1|^2 (b^{\eta})'(G_1-z)}+O_{\alpha,\eta,z}(\tfrac1{n^2}).
\end{multline*}
Recalling the assumption that $\Kc_{11}^1=0$,
and using again energy estimates for differences of correctors~\eqref{e:diff-corr} in the form
\begin{multline*}
\quad\big|\expec{(\nabla \phi_1+\ee_1)\cdot \partial_la_0(G) \nabla (\phi^{\eta,z,n}_1-\phi_1)}\big|
\,\lesssim\,\expec{|\nabla (\phi^{\eta,z,n}_1-\phi_1)|^2}^\frac12
\\
\,\lesssim\,\tfrac1n\expec{|\nabla\phi_1+e_1|^2|b^{\eta}(G_1-z)|^2}^\frac12,
\end{multline*}
we deduce
\begin{multline}\label{eq:rewr-K111}
(\Kc^{\eta,z,n})_{11}^1\,\ge\,\tfrac1n\, \expec{|\nabla \phi_1+\ee_1|^2\,(b^{\eta})'(G_1-z)}\\
-\tfrac Cn\,\expec{|\nabla\phi_1+e_1|^2 |b^{\eta}(G_1-z)|^2}^\frac12-C_{\alpha,\eta,z}\tfrac1{n^2}.
\end{multline}
We now argue that we can choose $0<\eta\le 1$ and $z\in\R$ such that $(\Kc^{\eta,z,n})_{11}^1$ is nonzero for all $n$ large enough.
The construction of the suitable choice of $\eta,z$ is split into four steps:

\begin{enumerate}[$\bullet$]
\smallskip\item Since $\expec{|\nabla \phi_1+\ee_1|^2}\simeq1$, it is easily seen by conditioning and by continuity in $z$ that there exist $z_0\in \R$ and $0<\eta_0\ll 1$ such that 
\begin{eqnarray}\label{e.choice-z}
\quad\gamma&:=&\inf_{z:|z-z_0|<\eta_0}\expeC{|\nabla \phi_1(0)+\ee_1|^2}{G_1(0)=z}~>~0,\\
\quad\Gamma&:=&\sup_{z:|z-z_0|<\eta_0}\expeC{|\nabla \phi_1(0)+\ee_1|^2}{G_1(0)=z}~<~\infty.\nonumber
\end{eqnarray}

\smallskip\item
We show that there exists $\delta\simeq1$ such that
$$
\lim_{\eta \downarrow 0} \frac{\expec{|\nabla\phi_1+e_1|^2 |b^{\eta}(G_1-z_0)|^2}^\frac12}{\eta^\delta\,\expec{|\nabla\phi_1+e_1|^2 |(b^{\eta})'(G_1-z_0)\big|}}\,=\,0,
$$
hence, for $\eta>0$ small enough,
\begin{equation}\label{eq:plop2}
\quad\expec{|\nabla\phi_1+e_1|^2 |b^{\eta}(G_1-z_0)|^2}^\frac12\,\le\eta^\delta\,\expec{|\nabla\phi_1+e_1|^2 |(b^{\eta})'(G_1-z_0)|}.
\end{equation}
By definition of $b^\eta$, using the Meyers integrability of the correctors (cf. Lemma~\ref{si}), the numerator is estimated as follows: there exists $\delta\simeq1$ such that, for all $0<\eta\le1$,
\begin{multline*}
\quad\expec{|\nabla\phi_1+e_1|^2|b^{\eta}(G_1-z_0)|^2}^\frac12\,\le\,\eta\,\expec{|\nabla\phi_1+e_1|^2e^{-\frac2\eta|G_1-z_0|}}^\frac12\\
\,\le\,\eta\,\expec{|\nabla\phi_1+e_1|^\frac2{1-4\delta}}^{\frac{1-4\delta}2}\expec{e^{-\frac1{2\eta\delta}|G_1-z_0|}}^{2\delta}
\,\lesssim_{\alpha,z_0}\,\eta^{1+2\delta},
\end{multline*}
while for the denominator we deduce from~\eqref{e.choice-z}, for all $0<\eta <\eta_0$,
\begin{multline*}
\quad\expec{|\nabla \phi_1+\ee_1|^2 |(b^{\eta})'(G_1-z_0)|}
\\
\,\ge\, \expeC{|\nabla \phi_1+\ee_1|^2 |(b^{\eta})'(G_1-z_0)|}{\tfrac12\eta<z_0-G_1<\eta} \pr{\tfrac12\eta<z_0-G_1<\eta}
\\
\,\ge\, e^{-1}\expeC{|\nabla \phi_1+\ee_1|^2}{\tfrac12\eta<z_0-G_1<\eta} \pr{\tfrac12\eta<z_0-G_1<\eta}\,\gtrsim_{z_0}\, \gamma  \eta,
\end{multline*}
and the claim follows.

\smallskip\item
We show that
\[\limsup_{\eta\downarrow0}\frac{\expec{|\nabla\phi_1+\ee_1|^2|(b^{\eta})'(G_1-z_0)|}}{\expec{|\nabla\phi_1+\ee_1|^2\,(b^{\eta})'(G_1-z_0)}}\,\lesssim_{z_0,\gamma,\Gamma}\,1,\]
hence, for $\eta>0$ small enough,
\begin{equation}\label{eq:plop1}
\expec{|\nabla\phi_1+\ee_1|^2|(b^{\eta})'(G_1-z_0)|}\,\lesssim_{z_0,\gamma,\Gamma}\,\,\expec{|\nabla\phi_1+\ee_1|^2\,(b^{\eta})'(G_1-z_0)}.
\end{equation}
Denoting by $\rho$ the (Gaussian) law of $G_1(0)$, we compute
\begin{multline*}
\quad\expec{|\nabla\phi_1+\ee_1|^2|(b^{\eta})'(G_1-z_0)|\,\mathds1_{G_1\ge z_0}}\,\le\,\expec{|\nabla\phi_1+\ee_1|^2e^{-\frac1{\eta^2}|G_1-z_0|}}\\
\,\le\,\int_{|z-z_0|\le\eta_0}\expeC{|\nabla\phi_1+\ee_1|^2}{G_1=z}e^{-\frac1{\eta^2}|z-z_0|}d\rho(z)+C e^{-\eta_0\frac{1}{\eta^2}},
\end{multline*}
hence, appealing to~\eqref{e.choice-z},
\begin{equation*}
\quad\expec{|\nabla\phi_1+\ee_1|^2|(b^{\eta})'(G_1-z_0)|\mathds1_{G_1\ge z_0}}
\,\lesssim_{z_0}\,\eta^2\Gamma+e^{-\eta_0\frac{1}{\eta^2}},
\end{equation*}
and similarly,
\begin{equation*}
\quad\expec{|\nabla\phi_1+\ee_1|^2\,(b^{\eta})'(G_1-z_0)\mathds1_{G_1\le z_0}}\,\lesssim_{z_0}\,\eta\Gamma+e^{-\eta_0\frac1{\eta}}.
\end{equation*}
Conversely, for $0<\eta<\eta_0$, we deduce from~\eqref{e.choice-z},
\begin{multline*}
\quad\expec{|\nabla\phi_1+\ee_1|^2\,(b^{\eta})'(G_1-z_0)\,\mathds1_{G_1\le z_0}}\\
\,\ge\,\expeC{|\nabla\phi_1+\ee_1|^2\,(b^{\eta})'(G_1-z_0)}{\tfrac12\eta\le z_0-G_1\le\eta}\pr{\tfrac12\eta\le z_0-G_1\le\eta}\\
\,\ge\,e^{-1}\expeC{|\nabla\phi_1+\ee_1|^2}{\tfrac12\eta\le z_0-G_1\le\eta}\pr{\tfrac12\eta\le z_0-G_1\le\eta}\,\gtrsim_{z_0}\,\gamma\eta.
\end{multline*}
Combining these estimates in the form
\[\expec{|\nabla\phi_1+\ee_1|^2|(b^{\eta})'(G_1-z_0)|}\,\lesssim_{z_0}\,\eta\Gamma+e^{-\eta_0\frac1\eta},\]
and
\[\expec{|\nabla\phi_1+\ee_1|^2(b^{\eta})'(G_1-z_0)}\,\ge\,\eta(\tfrac\gamma{C_{z_0}}-C_{z_0}\eta\Gamma)-Ce^{-\eta_0\frac1{\eta^2}},\]
the claim follows.

\smallskip\item
The combination of the above observations shows that for $\eta>0$ small enough there holds for all $n\ge1$,
\begin{eqnarray*}
\quad(\Kc^{\eta,z_0,n})_{11}^1&\stackrel{\eqref{eq:rewr-K111}}{\ge}&\tfrac1n\, \expec{|\nabla \phi_1+\ee_1|^2\,(b^{\eta})'(G_1-z_0)}\\
&&\qquad-\tfrac Cn\,\expec{|\nabla\phi_1+e_1|^2 |b^{\eta}(G_1-z_0)|^2}^\frac12-C_{\alpha,\eta,z_0}\tfrac1{n^2}\\
&\stackrel{\eqref{eq:plop2}}{\ge}&\tfrac1n\, \expec{|\nabla \phi_1+\ee_1|^2\,(b^{\eta})'(G_1-z_0)}\\
&&\qquad-\eta^\delta\tfrac{C}n\expec{|\nabla\phi_1+e_1|^2|(b^{\eta})'(G_1-z_0)|}-C_{\alpha,\eta,z_0}\tfrac1{n^2}\\
&\stackrel{\eqref{eq:plop1}}{\ge}&\tfrac1{n}\big(\tfrac1{C_{z_0,\gamma,\Gamma}}-C\eta^\delta\big)\, \expec{|\nabla \phi_1+\ee_1|^2\,|(b^\eta)'(G_1-z_0)|}-C_{\alpha,\eta,z_0}\tfrac1{n^2}.
\end{eqnarray*}
Choosing $\eta>0$ small enough, the right-hand side is seen to be strictly positive for all $n$ large enough, and the conclusion follows.
\qedhere
\end{enumerate}
\end{proof}

\section{Asymptotic normality}\label{chap:normality}

In this section, we establish the asymptotic normality of the rescaled homogenization commutator, thus proving Theorem~\ref{th:main}(ii).
The proof is based on the second-order Poincaré inequality of Proposition~\ref{prop:Mall}(iii); in the integrable case $\beta>d$ we follow the argument of~\cite[Section~9]{DO1}.

\begin{proof}[Proof of Theorem~\ref{th:main}(ii)]
We focus on the case $\e=1$ and drop the subscript in the notation.
The final result will be obtained by rescaling in the last step of the proof.
Set $I(F):=\int_{\R^d}F:\Xi$.
We split the proof into six steps.

\medskip
\step1 Representation formula for Malliavin derivatives: We claim that
\begin{align}\label{eq:DJ0}
DI(F)\,=\,(F_{ij}\ee_j+\nabla S_i)\cdot\partial a_0(G)(\nabla\phi_i+\ee_i),
\end{align}
and
\begin{align}\label{eq:D2J0}
D^2I(F)\,=\,U_1+U_2+U_3,
\end{align}
in terms of
\begin{eqnarray*}
U_1(x,y)&:=&\delta(x-y)\big(F_{ij}(\nabla\phi_j^*+\ee_j)\cdot \partial^2a_0(G)(\nabla\phi_i+\ee_i)\big)(x),\\
U_2(x,y)&:=&\tilde U_2(x,y)+\tilde U_2(y,x),\\
\tilde U_2(x,y)&:=&\big(F_{ij}(\nabla\phi_j^*+\ee_j)\cdot\partial a_0(G)\nabla D_{y}\phi_i\big)(x),\\
U_3(x,y)&:=&\delta(x-y)\big(\phi_j^*\nabla F_{ij}\cdot \partial^2a_0(G)(\nabla\phi_i+\ee_i)\big)(x)\\
&&+\big(\phi_j^*\nabla F_{ij}\cdot\partial a_0(G)\nabla D_{y}\phi_i\big)(x)+\big(\phi_j^*\nabla F_{ij}\cdot\partial a_0(G)\nabla D_{x}\phi_i\big)(y)\\
&&\qquad+\int_{\R^d}\nabla F_{ij}\cdot(\Aa\phi_j^*+\sigma_j^*)\nabla D_{xy}^2\phi_i,
\end{eqnarray*}
where we identify the operators $U_i$ with their kernels and
where   the auxiliary field $S$ is the Lax-Milgram solution in $\R^d$ of
\begin{equation}\label{eq:aux-S}
-\nabla\cdot\Aa^*\nabla S_i\,=\,\nabla\cdot\big((\Aa-\bar\Aa)^*F_{ij}\ee_j\big).
\end{equation}
(Note that we use a very basic representation formula for the first Malliavin derivative, which is enough here as we only need to deduce the CLT scaling, whereas for the second Malliavin derivative a much finer decomposition is required.)

\noindent
We start with the proof of~\eqref{eq:DJ0}. We compute
\begin{eqnarray*}
DI(F)&=&D\int_{\R^d}F_{ij}\ee_j\cdot(\Aa-\bar\Aa)(\nabla\phi_i+\ee_i)\\
&=&\int_{\R^d}F_{ij}\ee_j\cdot D\Aa\,(\nabla\phi_i+\ee_i)+\int_{\R^d}F_{ij}\ee_j\cdot(\Aa-\bar\Aa)\,\nabla D\phi_i,
\end{eqnarray*}
hence, using the equation~\eqref{eq:aux-S} for $S$ and the corrector equation in the form~\eqref{eq:Dphii},
\begin{eqnarray*}
DI(F)&=&\int_{\R^d}F_{ij}\ee_j\cdot D\Aa\,(\nabla\phi_i+\ee_i)-\int_{\R^d}\nabla S_i\cdot\Aa\nabla D\phi_i\\
&=&\int_{\R^d}(F_{ij}\ee_j+\nabla S_i)\cdot D\Aa\,(\nabla\phi_i+\ee_i).
\end{eqnarray*}
Using~\eqref{eq:Da}, the conclusion~\eqref{eq:DJ0} follows.
We turn to~\eqref{eq:D2J0}. The second Malliavin derivative takes the form
\begin{multline}\label{eq:D2I-interm}
D^2_{xy}I(F)\,=\,D_y\int_{\R^d}F_{ij}\ee_j\cdot\big(D_x\Aa\,(\nabla\phi_i+\ee_i)+(\Aa-\bar\Aa)\nabla D_x\phi_i\big)\\
\,=\,\int_{\R^d}F_{ij}\ee_j\cdot\big(D^2_{xy}\Aa\,(\nabla\phi_i+\ee_i)+D_x\Aa\nabla D_y\phi_i+D_y\Aa\nabla D_x\phi_i+(\Aa-\bar\Aa)\nabla D^2_{xy}\phi_i\big),
\end{multline}
and it remains to reformulate the last RHS term.
Inserting the definition of the flux corrector $\sigma_j^*$ in the form $(\Aa^*-\bar\Aa^*)\ee_j=-\Aa^*\nabla\phi_j^*+\nabla\cdot\sigma_j^*$ and using the skew-symmetry of~$\sigma_j^*$,
we find
\begin{eqnarray*}
\int_{\R^d}F_{ij}\ee_j\cdot (\Aa-\bar\Aa)\nabla D^2_{xy}\phi_i&=&\int_{\R^d}F_{ij}(\Aa^*-\bar\Aa^*)\ee_j\cdot \nabla D^2_{xy}\phi_i\\
&=&\int_{\R^d}F_{ij}(\nabla\cdot\sigma_j^*)\cdot \nabla D^2_{xy}\phi_i-\int_{\R^d}F_{ij}\nabla\phi_j^*\cdot \Aa\nabla D^2_{xy}\phi_i\\
&=&\int_{\R^d}\nabla F_{ij}\cdot\sigma_j^* \nabla D^2_{xy}\phi_i-\int_{\R^d}F_{ij}\nabla\phi_j^*\cdot \Aa\nabla D^2_{xy}\phi_i.
\end{eqnarray*}
Taking the Malliavin derivative $D_y$ of~\eqref{eq:Dphii} yields
\begin{equation}\label{eq:D2phi}
-\nabla\cdot\Aa\nabla D^2_{xy}\phi_i=\nabla\cdot D_{xy}^2\Aa(\nabla\phi_i+\ee_i)+\nabla\cdot D_{x}\Aa \nabla D_y\phi_i+\nabla\cdot D_{y}\Aa \nabla D_x\phi_i,
\end{equation}
and we deduce
\begin{multline*}
\int_{\R^d}F_{ij}\ee_j\cdot (\Aa-\bar\Aa)\nabla D^2_{xy}\phi_i
\,=\,\int_{\R^d}\phi_j^*\nabla F_{ij}\cdot D_{xy}^2\Aa(\nabla\phi_i+\ee_i)
+\int_{\R^d}F_{ij}\nabla\phi_j^*\cdot D_{xy}^2\Aa(\nabla\phi_i+\ee_i)\\
+\int_{\R^d}\phi_j^*\nabla F_{ij}\cdot \big(D_{x}\Aa \nabla D_y\phi_i+D_y\Aa \nabla D_x\phi_i\big)
+\int_{\R^d}F_{ij}\nabla\phi_j^*\cdot \big(D_{x}\Aa \nabla D_y\phi_i+D_y\Aa \nabla D_x\phi_i\big)\\
+\int_{\R^d}\nabla F_{ij}\cdot(\Aa\phi_j^*+\sigma_j^*)\nabla D^2_{xy}\phi_i.
\end{multline*}
Inserting this into~\eqref{eq:D2I-interm}, and using~\eqref{eq:Da} and
\begin{equation}\label{eq:D2a}
D^2_{xy}\Aa=\partial^2a_0(G(x))\,\delta(\cdot-x)\delta(x-y),
\end{equation}
the conclusion~\eqref{eq:D2J0} follows.

\medskip
\step2 Proof of
\begin{align*}
\expec{\|DI(F)\|_{\Hf}^4}^\frac14\,\lesssim\,\left\{\begin{array}{lll}
\|F\|_{\Ld^2(\R^d)}^2&:&\beta>d,\\
\|\log(2+|\cdot|)^\frac12[F]_2\|_{\Ld^2(\R^d)}^2&:&\beta=d,\\
\|[F]_2\|_{\Ld^{\frac{2d}{2d-\beta}}(\R^d)}^2&:&\beta<d.
\end{array}\right.
\end{align*}
We only treat the non-integrable case $\beta<d$  (the other cases are treated similarly) and we appeal 
to the representation formula~\eqref{eq:DJ0} for the Malliavin derivative $DI(F)$.
Using Lemma~\ref{lem:est-Hf} and the corrector estimates of Lemma~\ref{lem:cor}, we find
\begin{eqnarray*}
\expec{\|D I(F)\|_{\Hf}^4}^\frac14&\lesssim&
\|[(F_{ij}\ee_j+\nabla S_i)\cdot\partial a_0(G)(\nabla\phi_i+\ee_i)]_1\|_{\Ld^\frac{2d}{2d-\beta}(\R^d;\Ld^4(\Omega))}\\
&\lesssim&
\|[F]_2\|_{\Ld^\frac{2d}{2d-\beta}(\R^d)}+\|[\nabla S]_2\|_{\Ld^\frac{2d}{2d-\beta}(\R^d;\Ld^8(\Omega))},
\end{eqnarray*}
and the conclusion follows from the annealed Calder\'on-Zygmund estimate of Proposition~\ref{prop:ann}.

\medskip
\step3 Proof that for all $p\ge4$,
\[\expec{\|U_1\|_\op^4}^\frac14\,\lesssim\,\left\{\begin{array}{lll}
p\,w_c(p)^2\,\|w_c^2\,[F]_\infty\|_{\Ld^p(\R^d)}&:&\beta \ge d,\\
\|[F]_\infty\|_{\Ld^{\frac{d}{d-\beta}}(\R^d)}&:&\beta<d,
\end{array}\right.\]
where henceforth we set $w_c(x):=\log(2+|x|)^\frac12$ in the critical case $\beta=d$ and $w_c\equiv1$ otherwise.

\smallskip\noindent
Decomposing the covariance function as $c=c_0\ast c_0$ and noting that the norm of $\zeta$ in $\Hf$ coincides with the norm of $c_0\ast\zeta$ in $\Ld^2(\R^d)$,
the definition~\eqref{eq:def-op} of the operator norm $\|\cdot\|_\op$ can be rewritten as follows,
\begin{eqnarray*}
\|U_1\|_\op&=&\sup_{\|c_0\ast\zeta\|_{\Ld^2(\R^d)}=\|c_0\ast\zeta'\|_{\Ld^2(\R^d)}=1}\Big|\int_{\R^d}\int_{\R^d}(c\ast\zeta)(x)\,(c\ast\zeta')(y)\,U_1(x,y)\,dxdy\Big|\\
&\lesssim&\sup_{\|c_0\ast\zeta\|_{\Ld^2(\R^d)}=1}\Big|\int_{\R^d}\int_{\R^d}(c\ast\zeta)(x)\,(c\ast\zeta)(y)\,U_1(x,y)\,dxdy\Big|.
\end{eqnarray*}
Further noting that by the Hardy-Littlewood-Sobolev inequality similarly as in Lemma~\ref{lem:est-Hf} the decay assumption~\eqref{eq:cov-as-bis} for $c_0$ implies
\[\|c_0\ast\zeta\|_{\Ld^2(\R^d)}\,\gtrsim\,
\left\{\begin{array}{lll}
\|w_c^{-1}[c\ast\zeta]_\infty\|_{\Ld^2(\R^d)}&:&\beta \ge d,\\
\|[c\ast\zeta]_\infty\|_{\Ld^{{2d}/\beta}(\R^d)}&:&\beta<d,
\end{array}\right.\]
we find
\begin{equation}
\|U_1\|_\op
\,\lesssim\,\left\{\begin{array}{lll}
\displaystyle\sup_{\|[\zeta]_\infty\|_{\Ld^2(\R^d)}=1}\Big|\int_{\R^d}\int_{\R^d}(w_c\zeta)(x)\,(w_c\zeta)(y)\,U_1(x,y)\,dxdy\Big|&:&\beta \ge d,\\
\displaystyle\sup_{\|[\zeta]_\infty\|_{\Ld^{{2d}/\beta}(\R^d)}=1}\Big|\int_{\R^d}\int_{\R^d}\zeta(x)\,\zeta(y)\,U_1(x,y)\,dxdy\Big|&:&\beta<d.
\end{array}\right.\label{eq:reform-op}
\end{equation}
In the integrable case $\beta\ge d$, for $p\ge4$, inserting the definition of $U_1$, using Hölder's inequality, and applying the discrete $\ell^{\frac{2p}{p-1}}$--$\ell^2$ inequality in the form
\[\|[\zeta]_\infty\|_{\Ld^{\frac{2p}{p-1}}(\R^d)}\lesssim\|[\zeta]_\infty\|_{\Ld^2(\R^d)},\]
we find
\begin{eqnarray*}
\|U_1\|_\op&\lesssim&\sup_{\|[\zeta]_\infty\|_{\Ld^2(\R^d)}=1}\|[\zeta]_\infty\|_{\Ld^{\frac{2p}{p-1}}(\R^d)}^2\big\|w_c^2\big[ F_{ij}(\nabla\phi^*_j+\ee_j)\cdot\partial^2a_0(G)(\nabla\phi_i+\ee_i)\big]_1\big\|_{\Ld^p(\R^d)}\\
&\lesssim&\|w_c^2\,[ F]_\infty[\nabla\phi^*+\Id]_2[\nabla\phi+\Id]_2\|_{\Ld^p(\R^d)},
\end{eqnarray*}
hence, by stationarity and by the corrector estimates of Lemma~\ref{lem:cor},
\begin{eqnarray}
\expec{\|U_1\|_\op^4}^\frac14&\lesssim&\|w_c^2\,[F]_\infty[\nabla\phi^*+\Id]_2[\nabla\phi+\Id]_2\|_{\Ld^p(\R^d;\Ld^p(\Omega))}\nonumber \\
&\lesssim&\|w_c^2\,[F]_\infty\|_{\Ld^p(\R^d)}\|[\nabla\phi^*+\Id]_2\|_{\Ld^{2p}(\Omega)}\|[\nabla\phi+\Id]_2\|_{\Ld^{2p}(\Omega)}\nonumber \\
&\lesssim& p\,w_c(p)^2\,\|w_c^2\,[F]_\infty\|_{\Ld^p(\R^d)}.\label{eq:bound-U1-int}
\end{eqnarray}
In the non-integrable case $\beta<d$, the corresponding estimates take the form
\begin{eqnarray*}
\expec{\|U_1\|_\op^4}^\frac14&\lesssim&\expec{\big\|\big[F_{ij}(\nabla\phi^*_j+\ee_j)\cdot\partial^2a_0(G)(\nabla\phi_i+\ee_i)\big]_1\big\|_{\Ld^{\frac d{d-\beta}}(\R^d)}^4}^\frac14\\
&\lesssim&\big\|[F]_\infty[\nabla\phi^*+\Id]_2[\nabla\phi+\Id]_2\big\|_{\Ld^{\frac d{d-\beta}}\big(\R^d;\Ld^{4\vee\frac d{d-\beta}}(\Omega)\big)}\\
&\lesssim&\|[F]_\infty\|_{\Ld^{\frac d{d-\beta}}(\R^d)},
\end{eqnarray*}
as claimed.

\medskip
\step4 Proof that for all $p\ge4$,
\[\expec{\|U_2\|_\op^4}^\frac14\,\lesssim\,
\left\{\begin{array}{lll}
p\,w_c(p)^2\,\|w_c^2\,[F]_\infty\|_{\Ld^p(\R^d)}&:&\beta\ge d,\\
\|[F]_\infty\|_{\Ld^\frac{d}{d-\beta}(\R^d)}&:&\beta<d.
\end{array}\right.\]
By symmetry, it suffices to estimate the norm of $\tilde U_2$.
We start with the integrable case $\beta\ge d$.  It follows from~\eqref{eq:reform-op} and Hölder's inequality that
\begin{equation*}
\|\tilde U_2\|_\op^2\,\lesssim\,\sup_{\|[\zeta]_\infty\|_{\Ld^2(\R^d)}=1}\int_{\R^d}w_c^2\Big[\int_{\R^d}(w_c\zeta)(x)\,\tilde U_2(x,\cdot)\,dx\Big]_1^2,
\end{equation*}
and hence, by duality in form of
\[\expec{\|\tilde U_2\|_\op^4}^\frac14\,=\,\sup_{\|\xi\|_{\Ld^4(\Omega)}=1}\expec{\xi^2\|\tilde U_2\|_\op^2}^\frac12,\]
we deduce
\begin{gather}\label{eq:rewr-U2-int}
\expec{\|\tilde U_2\|_\op^4}^\frac14
\,\lesssim\,\sup_{\|[\zeta]_\infty\|_{\Ld^4(\Omega;\Ld^2(\R^d))}=1}M(\zeta),\\
M(\zeta)\,:=\,\expec{\int_{\R^d}w_c^2\Big[\int_{\R^d}(w_c\zeta)(x)\,\tilde U_2(x,\cdot)\,dx\Big]_1^2}^\frac12.\nonumber
\end{gather}
Let $\zeta$ be fixed with $\|[\zeta]_\infty\|_{\Ld^4(\Omega;\Ld^2(\R^d))}=1$. Note that the discrete $\ell^r$--$\ell^2$  inequality and Jensen's inequality entail for all $2\le r\le4$,
\begin{equation}\label{eq:LrLr-1}
\|[\zeta]_\infty\|_{\Ld^r(\R^d;\Ld^r(\Omega))}\lesssim1.
\end{equation}
Inserting the definition of $\tilde U_2$, defining the auxiliary field $T_i$ as the unique Lax-Milgram solution of
\begin{equation}\label{eq:def-aux-Ti}
-\nabla\cdot\Aa^*\nabla T_i=\nabla\cdot\big(w_c\,\zeta\,F_{ij}\,\partial a_0^*(G)(\nabla\phi_j^*+\ee_j)\big),
\end{equation}
using the corrector equation for $\phi_i$ in the form~\eqref{eq:Dphii}, and using~\eqref{eq:Da},
we may write
\begin{eqnarray}
M(\zeta)&=&\expec{\int_{\R^d}w_c^2\Big[\int_{\R^d}w_c\,\zeta\,F_{ij}\,(\nabla\phi_j^*+\ee_j)\cdot\partial a_0(G)\nabla D\phi_i\Big]_1^2}^\frac12 \nonumber \\
&=&\expec{\int_{\R^d}w_c^2\Big[\int_{\R^d}\nabla T_i\cdot\Aa\nabla D\phi_i\Big]_1^2}^\frac12 \nonumber\\
&=&\expec{\int_{\R^d}w_c^2\Big[\int_{\R^d}\nabla T_i\cdot D\Aa(\nabla\phi_i+\ee_i)\Big]_1^2}^\frac12 \nonumber\\
&\lesssim&\expec{\int_{\R^d}w_c^2\,[\nabla T]_2^2\,[\nabla\phi+\Id]_2^2}^\frac12.\label{eq:boundM-1step}
\end{eqnarray}
By Hölder's inequality and the corrector estimates of Lemma~\ref{lem:cor}, for $p\ge4$, this entails
\begin{eqnarray*}
M(\zeta)
\,\lesssim\,p^\frac12w_c(p)\|w_c[\nabla T]_2\|_{\Ld^2\big(\R^d;\Ld^\frac{4p}{2p-1}(\Omega)\big)}.
\end{eqnarray*}
Applying the annealed Calder\'on-Zygmund estimate of Proposition~\ref{prop:ann} with logarithmic weight and integrability loss $\delta=\frac{2p}{(p-1)(2p-1)}\sim\frac1p$, using Hölder's inequality, the corrector estimates of Lemma~\ref{lem:cor}, and~\eqref{eq:LrLr-1}, we are led to
\begin{eqnarray*}
M(\zeta)
&\lesssim&p^\frac12w_c(p)\|w_c^2\,[\zeta]_\infty[F]_\infty[\nabla\phi^*+\Id]_2\|_{\Ld^2\big(\R^d;\Ld^{\frac{2p}{p-1}}(\Omega)\big)}\\
&\lesssim&p^\frac12w_c(p)\|[\zeta]_\infty\|_{\Ld^{\frac{2p}{p-2}}\big(\R^d;\Ld^{\frac{2p}{p-2}}(\Omega)\big)}\|[\nabla\phi^*+\Id]_2\|_{\Ld^{2p}(\Omega)}\|w_c^2\,[F]_\infty\|_{\Ld^p(\R^d)}\\
&\lesssim&p\,w_c(p)^2\,\|w_c^2\,[F]_\infty\|_{\Ld^p},
\end{eqnarray*}
and the conclusion follows.

\smallskip\noindent
We turn to the non-integrable case $\beta<d$.
It follows from~\eqref{eq:reform-op} and Hölder's inequality that
\begin{equation*}
\|\tilde U_2\|_\op\,\lesssim\,\sup_{\|[\zeta]_\infty\|_{\Ld^{2d/\beta}(\R^d)}=1}\bigg(\int_{\R^d}\Big[\int_{\R^d}\zeta(x)\,\tilde U_2(x,\cdot)\,dx\Big]_1^\frac{2d}{2d-\beta}\bigg)^{\frac{2d-\beta}{2d}},
\end{equation*}
and hence, by duality in form of
\[\expec{\|\tilde U_2\|_\op^4}^\frac14=\sup_{\|\xi\|_{\Ld^{4d/\beta}(\Omega)}=1}\expec{\xi^\frac{4d}{d+\beta}\|\tilde U_2\|_\op^\frac{4d}{d+\beta}}^\frac{d+\beta}{4d},\]
we deduce the following version of~\eqref{eq:rewr-U2-int},
\begin{gather}\label{eq:rewr-U2-noint}
\expec{\|\tilde U_2\|_\op^4}^\frac14
\,\lesssim\,\sup_{\|[\zeta]_\infty\|_{\Ld^{4d/\beta}(\Omega;\Ld^{2d/\beta}(\R^d))}=1}M(\zeta),\\
M(\zeta)\,:=\,\expec{\bigg(\int_{\R^d}\Big[\int_{\R^d}\zeta(x)\,\tilde U_2(x,y)\,dx\Big]_1^{\frac{2d}{2d-\beta}}dy\bigg)^{2\frac{2d-\beta}{d+\beta}}}^\frac{d+\beta}{4d}.\nonumber
\end{gather}
Let $\zeta$ be fixed with $\|[\zeta]_\infty\|_{\Ld^{4d/\beta}(\Omega;\Ld^{{2d}/\beta}(\R^d))}=1$.
Note that the discrete $\ell^r$--$\ell^\frac{2d}{\beta}$ inequality and Jensen's inequality entail for all $\frac{2d}\beta\le r\le \frac{4d}{\beta}$,
\begin{equation}\label{eq:LrLr-2}
\|[\zeta]_\infty\|_{\Ld^r(\R^d;\Ld^r(\Omega))}\lesssim1.
\end{equation}
Arguing as in~\eqref{eq:boundM-1step}, with the auxiliary field $T_i$ defined in~\eqref{eq:def-aux-Ti}, and using the the triangle inequality with $2\frac{2d-\beta}{d+\beta}>1$, we obtain
\begin{eqnarray*}
M(\zeta)&\lesssim&\expec{\bigg(\int_{\R^d}[\nabla T]_2^\frac{2d}{2d-\beta}[\nabla\phi+\Id]_2^\frac{2d}{2d-\beta}\bigg)^{2\frac{2d-\beta}{d+\beta}}}^\frac{d+\beta}{4d}\\
&\lesssim&\big\|[\nabla T]_2[\nabla\phi+\Id]_2\big\|_{\Ld^\frac{2d}{2d-\beta}\big(\R^d;\Ld^\frac{4d}{d+\beta}(\Omega)\big)}.
\end{eqnarray*}
For $s_0:=\frac{2d(d+5\beta)}{3\beta(d+\beta)}>\frac{4d}{d+\beta}$ the corrector estimates of Lemma~\ref{lem:cor} then yield
\begin{equation*}
M(\zeta)\,\lesssim\,\|[\nabla T]_2\|_{\Ld^\frac{2d}{2d-\beta}(\R^d;\Ld^{s_0}(\Omega))}.
\end{equation*}
For $s_0<s_1:=\frac{2d(2d+4\beta)}{3\beta(d+\beta)}<\frac{2d}{\beta}$ and for $\frac1{s_2}=\frac1{s_1}-\frac\beta{2d}$, applying the annealed Calder\'on-Zygmund estimates of Proposition~\ref{prop:ann}, the corrector estimates of Lemma~\ref{lem:cor}, and~\eqref{eq:LrLr-2}, we deduce
\begin{eqnarray*}
M(\zeta)&\lesssim&\|[\zeta]_\infty[F]_\infty[\nabla\phi^*+\Id]_2\|_{\Ld^\frac{2d}{2d-\beta}(\R^d;\Ld^{s_1}(\Omega))}\\
&\le&\|[\zeta]_\infty\|_{\Ld^\frac{2d}{\beta}\big(\R^d;\Ld^{\frac{2d}\beta}(\Omega)\big)}\|[\nabla\phi^*+\Id]_2\|_{\Ld^{s_2}(\Omega)}\|[F]_\infty\|_{\Ld^\frac{d}{d-\beta}(\R^d)}\\
&\lesssim&\|[F]_\infty\|_{\Ld^\frac{d}{d-\beta}(\R^d)},
\end{eqnarray*}
and the conclusion follows.

\medskip
\step5 Proof that for all $q\ge4$,
\[\expec{\|U_3\|_\op^4}^\frac14\,\lesssim\,\left\{\begin{array}{lll}
C_q\|\mu_{d,\beta}w_c^2\,[\nabla F]_\infty\|_{\Ld^q(\R^d)}&:&\beta\ge d,\\
\|\mu_{d,\beta}[\nabla F]_\infty\|_{\Ld^\frac d{d-\beta}(\R^d)}&:&\beta<d.
\end{array}\right.\]
Note that the dependence on $q$ does not need to be made specific here since this contribution is of higher order, cf.~Step~6.

\smallskip\noindent
We start with a suitable reformulation of $U_3$.
Defining the auxiliary field $V_i$ as the unique Lax-Milgram solution of
\[-\nabla\cdot\Aa^*\nabla V_i\,=\,\nabla\cdot\big((\Aa^*\phi_j^*-\sigma_j^*)\nabla F_{ij}\big),\]
using the corrector equation for $\phi_i$ in the form~\eqref{eq:D2phi}, and using~\eqref{eq:Da} and~\eqref{eq:D2a},
we may write
\begin{eqnarray*}
\lefteqn{\int_{\R^d}\nabla F_{ij}\cdot(\Aa\phi_j^*+\sigma_j^*)\nabla D_{xy}^2\phi_i\,=\,-\int_{\R^d}\nabla V_i\cdot\Aa\nabla D_{xy}^2\phi_i}\\
&\qquad=&\int_{\R^d}\nabla V_i\cdot D_{xy}^2\Aa\,(\nabla\phi_i+\ee_i)+\int_{\R^d}\nabla V_i\cdot\big(D_{x}\Aa\,\nabla D_y\phi_i+ D_{y}\Aa\,\nabla D_x\phi_i\big)\\
&\qquad=&\delta(x-y)\big(\nabla V_i\cdot \partial^2a_0(G)(\nabla\phi_i+\ee_i)\big)(x)\\
&\qquad&\qquad+\big(\nabla V_i\cdot\partial a_0(G)\nabla D_y\phi_i\big)(x)+\big(\nabla V_i\cdot\partial a_0(G)\nabla D_x\phi_i\big)(y).
\end{eqnarray*}
This allows to decompose $U_3=U_3^1+U_3^2$ with
\begin{eqnarray*}
U_3^1(x,y)&:=&\delta(x-y)\big((\phi_j^*\nabla F_{ij}+\nabla V_i)\cdot \partial^2a_0(G)(\nabla\phi_i+\ee_i)\big)(x),\\
U_3^2(x,y)&:=&\tilde U_3^2(x,y)+\tilde U_3^2(y,x),\\
\tilde U_3^2(x,y)&:=&\big((\phi_j^*\nabla F_{ij}+\nabla V_i)\cdot\partial a_0(G)\nabla D_{y}\phi_i\big)(x).
\end{eqnarray*}
As $U_3^1$ and $U_3^2$ have a similar structure as $U_1$ and $U_2$, their norms are estimated by a simple modification of the argument of Steps~3 and~4.
As an illustration, we treat $U_3^1$ in the integrable case $\beta\ge d$ --- the other estimates are analogous and details are omitted.
Arguing as in~\eqref{eq:bound-U1-int}, we find for $q\ge1$,
\begin{eqnarray*}
\expec{\|U_3^1\|_\op^4}^\frac14&\lesssim&\|w_c^2\,[\phi^*\nabla F+\nabla V]_2\|_{\Ld^q(\R^d;\Ld^{2q}(\Omega))}\|[\nabla\phi+\Id]_2\|_{\Ld^{2q}(\Omega)}\\
&\lesssim_q&\|w_c^2\,[\phi^*\nabla F+\nabla V]_2\|_{\Ld^q(\R^d;\Ld^{2q}(\Omega))},
\end{eqnarray*}
and the (weighted) annealed Calder\'on-Zygmund estimate of Proposition~\ref{prop:ann} (with $\delta=1$) together with the corrector estimates of Lemma~\ref{lem:cor} lead to
\begin{equation*}
\expec{\|U_3^1\|_\op^4}^\frac14\,\lesssim_q\,\|w_c^2\,[(\phi^*,\sigma^*)\nabla F]_2\|_{\Ld^q(\R^d;\Ld^{2q+1}(\Omega))}\,\lesssim_p\,\|\mu_{d,\beta}w_c^2\,[\nabla F]_\infty\|_{\Ld^q(\R^d)}.
\end{equation*}

\medskip
\step6 Conclusion.\\
In the integrable case $\beta > d$, for $I_\e(F):=\e^{-\frac d2}\int_{\R^d}F(x):\Xi(\frac x\e)\,dx$, the conclusions of Steps~3--5 yield by scaling, for all $4\le p,q<\infty$,
\begin{equation*}
\expec{\|D^2I_\e(F)\|_\op^4}^\frac14\,\lesssim\,\e^{\frac d2}\Big(
p\,\e^{-\frac dp}\|[F]_\infty\|_{\Ld^p(\R^d)}+C_q\e^{1-\frac dq}\mu_{d,\beta}(\tfrac1\e)\|\mu_{d,\beta}[\nabla F]_\infty\|_{\Ld^q(\R^d)}\Big).
\end{equation*}
Hence,
choosing $p=\Log$ and $q=2d\vee p$, we deduce
\begin{equation*}
\expec{\|D^2I_\e(F)\|_\op^4}^\frac14\,\lesssim\,\e^{\frac d2}\Log\Big(
\|[F]_\infty\|_{\Ld^2\cap \Ld^\infty(\R^d)}+\|\mu_{d,\beta}[\nabla F]_\infty\|_{\Ld^2\cap\Ld^\infty(\R^d)}\Big).
\end{equation*}
In the critical case $\beta=d$, for $I_\e(F):=\e^{-\frac d2}\Log^{-\frac12} \int_{\R^d}F(x):\Xi(\frac x\e)\,dx$, the same argument yields
\begin{multline*}
\expec{\|D^2I_\e(F)\|_\op^4}^\frac14\,\lesssim\,\e^{\frac d2}\Log^\frac32\log\Log\\
\times\Big(
\|w_c^2\,[F]_\infty\|_{\Ld^2\cap \Ld^\infty(\R^d)}+\|\mu_{d,\beta}w_c^2\,[\nabla F]_\infty\|_{\Ld^2\cap\Ld^\infty(\R^d)}\Big).
\end{multline*}
In the non-integrable case $\beta<d$, for $I_\e(F):=\e^{-\frac\beta2}\int_{\R^d}F(x):\Xi(\frac x\e)\,dx$, the conclusions of Steps~3--5 yield by scaling,
\begin{eqnarray*}
\expec{\|D^2I_\e(F)\|_\op^4}^\frac14&\lesssim&\e^{\frac\beta2}\Big(\|[F]_\infty\|_{\Ld^\frac{d}{d-\beta}(\R^d)}+\e\mu_{d,\beta}(\tfrac1\e)\|\mu_{d,\beta}[\nabla F]_\infty\|_{\Ld^\frac{d}{d-\beta}(\R^d)}\Big)\\
&\lesssim&\e^{\frac\beta2}\Big(\|[F]_\infty\|_{\Ld^\frac{d}{d-\beta}(\R^d)}+\|\mu_{d,\beta}[\nabla F]_\infty\|_{\Ld^\frac{d}{d-\beta}(\R^d)}\Big).
\end{eqnarray*}
Likewise, the result of Step~2 yields
\[\expec{\|DI_\e(F)\|_{\Hf}^4}^\frac14\,\lesssim\,\left\{\begin{array}{lll}
\|w_c F\|_{\Ld^2(\R^d)}&:&\beta\ge d,\\
\|F\|_{\Ld^{\frac{2d}{2d-\beta}}(\R^d)}&:&\beta<d.
\end{array}\right.\]
Now applying Proposition~\ref{prop:Mall}(iii) in the form
\begin{multline*}
\dWW{\frac{I_\e(F)}{\var{I_\e(F)}^\frac12}}\Nc+\dTV{\frac{I_\e(F)}{\var{I_\e(F)}^\frac12}}\Nc\\
\,\lesssim\,\frac{\expec{\|D^2I_\e(F)\|_\op^4}^\frac14\expec{\|DI_\e(F)\|_\Hf^4}^\frac14}{\var{I_\e(F)}},
\end{multline*}
and inserting the above estimates for $\|D^2I_\e(F)\|_\op$ and $\|DI_\e(F)\|_\Hf$, the conclusion follows.\\
\end{proof}

\section*{Acknowledgments}
The authors thank  Ivan Nourdin  and Felix Otto for inspiring discussions.
The work of MD is financially supported by the CNRS-Momentum program.
Financial support of AG is acknowledged from the European Research Council under the European Community's Seventh Framework Programme (FP7/2014-2019 Grant Agreement QUANTHOM 335410).

\bibliographystyle{plain}

\def\cprime{$'$} \def\cprime{$'$} \def\cprime{$'$}

\end{document}